\newtheorem{proposition}{Proposition}[section]
\newtheorem{theorem}{Theorem}[section]
\newtheorem{definition}{Definition}[section]
\newtheorem{lemma}{Lemma}[section]
\newtheorem{remark}{Remark}[section]
\title{Adaptive Randomized Extended Bregman-Kaczmarz Method for Combined Optimization Problems}
\author{ 
Zeyu Dong \\
Shanghai Research Institute for Intelligent Autonomous Systems, Tongji University,\\
Shanghai 200092, China.\\
Email: dongzeyu@tongji.edu.cn \\
	Aqin Xiao\\
	School of Mathematical Sciences, Tongji University, \\
	Shanghai 200092, China. \\
	Corresponding author, Email:xiaoaqin@tongji.edu.cn \\ 
    Guojian Yin\\
The Institute for Advanced Study, Shenzhen University, \\
Shenzhen, Guangdong 518001, China \\
Email:yin@szu.edu.cn\\
        and \\
	Junfeng Yin\\
	Key Laboratory of Intelligent Computing and Applications (Ministry of Education), \\
School of Mathematical Sciences, Tongji University, \\
Shanghai 200092, China \\
	Email:yinjf@tongji.edu.cn 
}
\begin{document}
\maketitle   

\begin{abstract}
Combined optimization problems that couple data-fidelity and regularization terms arise naturally in a wide range of inverse problems. In this paper, we study an adaptive randomized averaging block extended Bregman-Kaczmarz (aRABEBK) method for solving such problems. The proposed method incorporates iteration-wise relaxation parameters that are automatically adjusted using residual information, allowing for more aggressive step sizes without additional manual tuning. We establish a convergence theory for the proposed framework and derive expected linear convergence rate guarantees. Numerical experiments on both synthetic and real data sets for sparse and minimum-norm least-squares problems demonstrate that our aRABEBK method achieves faster convergence and improved robustness compared with state-of-the-art extended Kaczmarz and Bregman-Kaczmarz-type algorithms, including its nonadaptive counterpart.
\end{abstract}

\noindent{\bf Key words.} randomized extended Kaczmarz method,  Bregman-Kaczmarz, averaging block, adaptive relaxation parameter, sparse solutions, least squares problem \\

\section{Introduction}
We consider the fundamental combined optimization problems as follows:
\begin{equation}\label{eq:main}
 \min_{x \in \mathbb{R}^n} f(x), \quad \text{s.t.} \quad A x = \hat{y}, \quad \text{with} \quad \hat{y} =\arg\min_{y \in \mathcal{R}(A)} g^*(b - y),
\end{equation}    
where $A\in \mathbb{R}^{m\times n}$, $\hat{y}\in \mathbb{R}^{m}$, $b\in \mathbb{R}^m$, $\mathcal{R}(A)$ is the range of $A$, the objective function $f$ is assumed to be strongly convex but possibly nonsmooth, and  $g^*$ is a suitable data misfit function; see \cite{Schpfer2022ExtendedRK}. Problems of the form \eqref{eq:main} naturally arise when one seeks to integrate  prior structural information, such as sparsity or smoothness, into inverse problems and data-driven models.

A representative example is obtained when $$f(x) = \lambda \|x\|_1 + \frac{1}{2}\|x\|^2_2,$$ which is well known to promote sparse solutions for $\lambda>0$; see
\cite{chen2001atomic,donoho2006compressed,cai2009convergence}.  
If the data misfit adopts the standard least-squares form $$g^*(b-y) = \frac{1}{2}\|b - y\|_2^2,$$ then \eqref{eq:main} reduces to 
\begin{equation} \label{eq:main_partical}
    \min_{x \in \mathbb{R}^n} \lambda\|x\|_1 + \frac{1}{2}\|x\|_2^2, \quad \text{s.t.} \quad A x = \hat{y}, \quad \text{with} \quad \hat{y} = \arg \min_{y \in \mathcal{R}(A)} \frac{1}{2}\|b - y\|_2^2,
\end{equation}
which yields sparse least-squares reconstructions.

Another important special case arises when $f(x) = \tfrac{1}{2}\|x\|_2^2$, leading to the classical minimum-norm least-squares problem:
\begin{equation} \label{eq:main_minnorm}
    \min_{x \in \mathbb{R}^n} \tfrac{1}{2}\|x\|_2^2,
    \quad \text{s.t.} \quad A x = \hat{y},
    \quad \text{with} \quad 
    \hat{y} = \arg \min_{y \in \mathcal{R}(A)} \tfrac{1}{2}\|b - y\|_2^2,
\end{equation}
which plays a central role in ill-posed inverse problems and provides the minimum-norm solution consistent with the observed data.

Optimization models of the form \eqref{eq:main}--\eqref{eq:main_minnorm} arise in a broad range of modern applications, including compressed sensing \cite{cai2009linearized,yin2008Bregman}, signal and image processing \cite{dong2025surrogate,elad2010sparse,liang2020deep}, and machine learning and inverse problems \cite{adler2017solving,arridge2019solving,benning2018modern}. The efficient solution of these problems, particularly in large-scale or inconsistent settings, poses significant computational challenges. These challenges have, in turn, motivated the development of advanced iterative methods and projection-based algorithms capable of exploiting problem structure while maintaining stability and scalability.

In particular, when the linear system is consistent, i.e., $b \in \mathcal{R}(A)$, we have $\hat{y} = b$ in \eqref{eq:main_minnorm}, and the problem reduces to computing a minimum-norm solution of $Ax=b$. A classical method for this task is the Kaczmarz algorithm \cite{kaczmarz1937}, which iteratively projects onto the solution hyperplanes associated with individual rows of $A$. Owing to its simplicity and low per-iteration cost, the Kaczmarz method has inspired a wide range of randomized, block, and extended variants that significantly improve convergence behavior, especially for large-scale and possibly inconsistent systems.

\subsection{Background and related work}
In the standard Kaczmarz iteration, the rows of $A$ are visited cyclically.  At each iteration, the current iterate is projected onto the hyperplane $$H_{i_k}:\{x:  A^\top _ { i_k,: } x   = b _ { i_k }\}$$
defined by the active row index $i_k \in [m] = \{1,2,\ldots,m\}$.  Given an initial vector $x_0 \in \mathbb{R}^n$, the $(k+1)$-th iterate is computed as
\begin{equation} \label{eq:classicalKaczmarz}
        x _ { k + 1 }  = x _ { k } + \frac {b _ { i_k } - A _ { i_k,: } x _ { k } } { \| A _ { i_k,:  }\| _ { 2 } ^ { 2 } } \cdot (A _ { i_k,:  })^\top,
\end{equation}
which is the orthogonal projection of $x_k$ onto $H_{i_k}$.

To accelerate the convergence of the classical Kaczmarz method, Strohmer and Vershynin introduced in their seminal work \cite{strohmer2009randomized} the randomized Kaczmarz (RK) method, in which the active row is selected at random with probability proportional to its squared Euclidean norm. They established that the RK iteration enjoys an expected linear convergence rate, offering a substantial improvement over the cyclic deterministic Kaczmarz method.

Despite its favorable convergence properties, the RK method and its early variants generally fail to converge when the linear system is inconsistent. To address this limitation, Zouzias and Freris \cite{zouzias2013randomized} proposed the Randomized Extended Kaczmarz (REK) method, which augments the RK framework to restore convergence for inconsistent systems such as \eqref{eq:main_minnorm}. The key insight is that the right-hand side vector $b$ in \eqref{eq:main_minnorm} admits the orthogonal decomposition $$b = \hat{y}+z, \qquad \hat{y} \in \mathcal{R}(A) ,\quad  z\in  \mathcal{N}(A^\top),$$
which naturally yields two consistent subsystems:
$$Ax = \hat{y}  \qquad \text{and} \qquad A^{\top}z = 0.$$

The basic idea of the REK method is to perform two randomized Kaczmarz-type updates at each iteration, one associated with each of the consistent subsystems above. By jointly updating the primary variable $x$ and an auxiliary variable $z$, the inconsistency of the original system is effectively removed, and convergence to the minimum-norm least-squares solution is ensured. Analogous to the standard Kaczmarz iteration \eqref{eq:classicalKaczmarz}, the REK method updates the iterates according to
\begin{equation} \label{eq:extendedclassicalKaczmarz}
    \begin{split}
        z_{k+1}&=  z_{k}-\frac{(  A_{:,j_k})^\top   z_{k}}{\|A_{:,j_k}\|_2^2}  A_{:,j_k},\\
   x_{k+1}&= x_{k}-\frac{  A_{i_k,:}  x_{k}-  b_{i_k}+  z_{k+1,i_k}}{  \|A_{i_k,:}\|_2^2 }(  A_{i_k,:})^\top,
    \end{split}
\end{equation}
where the column index $j_k$ and the row index $i_k$ are independently selected according to the probability distributions
$$ \mathbb{P}(j_k = j) = \frac{\|A_{:,j}\|_2^2}{\|A\|^2_F}, \qquad \mathbb{P}(i_k = i) = \frac{\|A_{i,:}\|_2^2}{\|A\|^2_F}.$$

More recently, the  Bregman-Kaczmarz (BK) method \cite{tondji2023bregman,schopfer2019linear,lorenz2014sparse,tondji2023adaptive,tondji2023acceleration,Dong2025,xiao2025fast} have been developed as a generalization of the classical Kaczmarz framework. 
In this kind of approach, the Euclidean distance underlying the projection step is replaced by a Bregman distance induced by a strongly convex function $f$, thereby allowing the incorporation of structural priors and regularization effects.

The BK method is designed to solve the constrained optimization problem
\begin{equation} \label{eq:BK}
    \min_{x \in \mathbb{R}^n} f(x) \quad \mathrm{s.t.}\quad Ax=b,
\end{equation} 
which corresponds to the consistent case of problem~\eqref{eq:main}.  Let $\nabla f^{*}$ denote the gradient of the Fenchel conjugate $f^{*}$ of $f$, which maps the dual iterate to the corresponding primal iterate. Starting from the initialization $x_0^{*}=0$ and $x_0 = \nabla f^{*}(x_0^{*})$, the $(k+1)$-th iterate is computed via
\begin{equation} \label{eq:Bregman-Kaczmarz2}
    \begin{split}
        &x _ { k + 1 } ^ { * } = x _ { k } ^ { * } +  \frac {b _ { i_k } - A _ { i_k,: } x _ { k } } { \| A _ { i_k,:  }\| _ { 2 } ^ { 2 } } \cdot (A _ { i_k,:  })^\top, \\
        &x _ { k + 1 } =\nabla f^{*}(x^{*}_{k+1}).
    \end{split}
\end{equation}

The update scheme \eqref{eq:Bregman-Kaczmarz2} consists of two stages.  
The first stage, referred to as the \emph{dual-space update} \cite{tondji2023bregman}, resembles the standard Kaczmarz projection \eqref{eq:classicalKaczmarz} but is performed in the dual variable $x_k^{*}$.  The second stage, known as  the \emph{primal-space update} \cite{tondji2023bregman}, maps the updated dual iterate $x_{k+1}^{*}$ back to the primal space via the gradient of the Fenchel conjugate, $\nabla f^{*}$, yielding the next primal iterate $x_{k+1}$.

When the generating function is chosen as $f(x)=\tfrac{1}{2}\|x\|_2^2$, its conjugate satisfies $f^*=f$ and $\nabla f^*(x)=x$. In this setting, the BK iteration reduces exactly to the standard Kaczmarz method \eqref{eq:classicalKaczmarz}, and the iterates converge to the minimum-norm solution of the consistent linear system.

More generally, different choices of the function $f$ lead to Kaczmarz-type algorithms with distinct structural properties. For instance, selecting $$ f(x)=\lambda\|x\|_1+\tfrac{1}{2}\|x\|_2^2 $$
in \eqref{eq:BK} promotes sparsity in the solution. In this case, the gradient of the conjugate $\nabla f^{*}$ coincides with the soft-thresholding operator $S_{\lambda}(\cdot)$, and the primal update takes the form $$ x_{k+1}=S_{\lambda}(x_{k+1}^{*}). $$
Consequently, the BK method specializes to the sparse Kaczmarz algorithm \cite{lorenz2014sparse}.

To extend the Bregman-Kaczmarz method to solve inconsistent linear systems, Schöpfer et al.~\cite{Schpfer2022ExtendedRK} proposed the \emph{randomized extended Bregman-Kaczmarz} (REBK) method for solving \eqref{eq:main}. The REBK algorithm can be viewed as a synthesis of the randomized extended Kaczmarz (REK) strategy \eqref{eq:extendedclassicalKaczmarz} and the BK framework \eqref{eq:Bregman-Kaczmarz2}, thereby combining inconsistency handling with structure-promoting regularization induced by Bregman distances. Its iterative scheme is given by
\begin{equation} \label{eq:REBK_scheme}
    \begin{split}
   z_{k+1}^*&=  z_{k}^*-\frac{(  A_{:,j_k})^\top   z_{k}}{\|A_{:,j_k}\|_2^2}  A_{:,j_k},\\
   z_{k+1}&= \nabla g^*(z_{k+1}^*);\\
   x_{k+1}^*&= x_{k}^*-\frac{  A_{i_k,:}  x_{k}-  b_{i_k}+  z^*_{k+1,i_k}}{  \|A_{i_k,:}\|_2^2 }(  A_{i_k,:})^\top, \\
     x_{k+1}&= \nabla f^*(x_{k+1}^*).
    \end{split}
\end{equation}

In parallel, averaging block techniques have emerged as an effective mechanism for accelerating Kaczmarz-type methods by exploiting multiple projections per iteration. 
Necoara~\cite{necoara2019faster} introduced the \emph{randomized averaging block Kaczmarz} (RABK) algorithm, in which a block of rows is selected at each iteration, multiple randomized Kaczmarz projections are performed onto the associated hyperplanes, and a convex combination of these projections is used to define a new search direction with an appropriate stepsize.

Building upon this idea, Du et al. \cite{du2020randomized} extended the averaging block strategy to the extended Kaczmarz framework and proposed the \emph{randomized extended averaging block Kaczmarz} (REABK) method for solving the minimum-norm least-squares problem \eqref{eq:main_minnorm}. 
More recently, in our work \cite{dong2025relaxed}, we further unified the averaging block mechanism with the extended Bregman-Kaczmarz framework, leading to the \emph{randomized averaging block extended Bregman-Kaczmarz} (RABEBK) method, together with its constant-relaxation variant (cRABEBK), for solving the general combined optimization problem \eqref{eq:main}.

\subsection{Main contributions} In this work, we take a further step by incorporating \emph{adaptive relaxation parameters} into the RABEBK framework, yielding the \emph{adaptive randomized averaging block extended Bregman-Kaczmarz} (aRABEBK) methods. The relaxation parameters are updated at each iteration using an extrapolation-inspired strategy \cite{necoara2019faster}, allowing the algorithm to make more aggressive yet stable progress along the block-averaged update directions, which significantly enhances convergence performance.

The main contributions of this paper can be summarized as follows:
\begin{enumerate}
  \item We propose a class of \emph{adaptive randomized averaging block extended Bregman-Kaczmarz} (aRABEBK) methods for solving the combined optimization problem \eqref{eq:main}. 
  The algorithms incorporate iteration-dependent relaxation parameters that are automatically adjusted based on residual information, effectively exploiting the block structure of the updates.
  \item We establish a comprehensive convergence theory for the proposed aRABEBK methods. 
  We prove expected linear convergence rates and derive explicit error bounds that capture the effect of the adaptive relaxation strategy.
   \item We introduce a general adaptive relaxation framework that can be integrated with several existing Kaczmarz-type schemes. In particular, for the combined optimization problem \eqref{eq:main}, aRABEBK can be interpreted as an adaptive extension of our previous randomized averaging block extended Bregman-Kaczmarz (RABEBK) methods \cite{dong2025relaxed}, whereas for the minimum-norm least-squares problem \eqref{eq:main_minnorm}, the resulting algorithm can be viewed as an adaptive counterpart of the randomized extended average block Kaczmarz (REABK) method~\cite{du2020randomized}.
\end{enumerate}

\subsection{Organization}
The remainder of this paper is organized as follows. 
In Section~\ref{sec:prelim}, we introduce the notation and preliminaries used throughout the paper. 
Section~\ref{sec:RABEBK-a} provides a brief review of the randomized averaging block extended Bregman-Kaczmarz (RABEBK) method, including its constant-relaxation variant (cRABEBK), and then presents the adaptive extension, aRABEBK. Section~\ref{sec:RABEBK-Convergence} is devoted to a detailed convergence analysis of aRABEBK. Numerical experiments are reported in Section~\ref{sec:numerical experiments} to demonstrate the efficiency of the proposed method. 
Finally, Section~\ref{sec:conclusion} concludes the paper and discusses potential directions for future work.

\section{ Preliminaries}\label{sec:prelim}
In this section, we introduce notations and recall fundamental concepts from convex analysis that will be used throughout the paper.

For a vector $x \in \mathbb{R}^n$, we denote by $x^\top$ its transpose, and by $\|x\|_1$ and $\|x\|_2$ its $\ell_1$- and $\ell_2$-norms, respectively.  For a matrix $A \in \mathbb{R}^{m \times n}$, we denote by $A_{i,:}$ and $A_{:,j}$ the $i$-th row and $j$-th column, respectively; $A^\top$ the transpose; $A^\dagger$ the Moore-Penrose general inverse; $\mathcal{R}(A)$ the range space; $\|A\|_2$ the spectral norm; $\|A\|_F$ the Frobenius norm; and $\sigma_{\max}(A)$ and $\sigma_{\min}(A)$ the largest and smallest nonzero singular values of $A$, respectively.

For index sets $\mathcal{I} \subseteq [m]$ and $\mathcal{J} \subseteq [n]$, we define $A_{\mathcal{I},:}$, $A_{:,\mathcal{J}}$, and $A_{\mathcal{I},\mathcal{J}}$ as the row submatrix indexed by $\mathcal{I}$, the column submatrix indexed by $\mathcal{J}$, and the submatrix consisting of rows indexed by $\mathcal{I}$ and columns indexed by $\mathcal{J}$, respectively.

We call $\{\mathcal{I}_1,\mathcal{I}_2,\ldots,\mathcal{I}_s\}$ a partition of $[m]$ if
$\mathcal{I}_i \cap \mathcal{I}_j = \emptyset$ for $i \neq j$ and $\bigcup_{i=1}^s \mathcal{I}_i = [m]$.
Similarly, $\{\mathcal{J}_1,\mathcal{J}_2,\ldots,\mathcal{J}_t\}$ is a partition of $[n]$ if
$\mathcal{J}_i \cap \mathcal{J}_j = \emptyset$ for $i \neq j$ and $\bigcup_{j=1}^t \mathcal{J}_j = [n]$.
The cardinality of a set $\mathcal{I} \subseteq [m]$  is denoted by $|\mathcal{I}|$.

The soft shrinkage operator $S_\lambda(\cdot)$ is defined componentwise as $$(S_{\lambda}(x))_{j}=\max \left\{ |x_{j}|- \lambda ,0 \right\} \cdot \mathrm{sign}(x_{j}),$$ where $\mathrm{sign}(\cdot)$ denotes the sign function.

Let $f: \mathbb{R}^n\rightarrow \mathbb{R}$ be a convex function. The \emph{subdifferential} of $f$ at any point $x \in \mathbb{R}^n$ is defined by
\begin{equation*}
    \partial f ( x ) \stackrel{\text { def }}{=} \{ x ^ { * }  \in \mathbb{R} ^ { n } | f ( y ) \geq f ( x ) + \langle  x ^ { * } , y - x \rangle  , \forall y\in \mathbb{R}^n\}.
\end{equation*}

The \emph{Fenchel conjugate} of $f$, denoted $f^*:\mathbb{R}^n \rightarrow \mathbb{R}$, is defined as
$$f^{*}(x^{*})\stackrel{\text { def }}{=}\sup_{y \in \mathbb{R}^{n}} \{\langle x^{*},y \rangle -f(y)\}.$$

If $f$ is differentiable at $x$, we denote its gradient by $\nabla f(x)$, and in this case the subdifferential reduces to a singleton: $\partial f(x) = \{ \nabla f(x)\}$\cite{schopfer2019linear}.

A convex function $f:\mathbb{R}^n \rightarrow \mathbb{R}$ is said to be \emph{$\alpha$-strongly convex} if there exists a constant $\alpha > 0$ such that for all $x, y \in \mathbb{R}^n$ and any $x^* \in \partial f(x)$,
\begin{equation*}
    f(y)\geq f(x)+ \langle x^{*},y-x \rangle + \frac{\alpha}{2}\cdot \|y-x\|_{2}^{2}.
\end{equation*}

\begin{lemma}[\cite{rockafellar1998variational}]
If $f:\mathbb{R}^n \rightarrow \mathbb{R}$ is $\alpha$-strongly convex, then its conjugate function $f^*$ is differentiable with a $1/\alpha$-Lipschitz-continuous gradient, i.e.
\begin{equation*}
    \| \nabla f^{*}(x^{*})- \nabla f^{*}(y^{*})\|_{2}\leq \frac{1}{\alpha}\cdot \|x^{*}-y^{*}\|_{2}, \quad \forall x^*, y^* \in \mathbb{R}^n,
\end{equation*}
which implies the estimate
\begin{equation} \label{eq:alpha-strongly convex}
   f^{*}(y^{*})\leq f^{*}(x^{*})+\langle \nabla f^{*}(x^{*}),y^{*}-x^{*}\rangle + \frac{1}{2\alpha}\|y^{*}-x^{*}\|_{2}^{2}. 
\end{equation}
\end{lemma}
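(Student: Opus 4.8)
The plan is to prove the two assertions in turn: first the $1/\alpha$-Lipschitz continuity of $\nabla f^*$, and then the quadratic upper bound \eqref{eq:alpha-strongly convex} as a consequence of it. The conceptual core is the duality between strong convexity of $f$ and smoothness of $f^*$, so I would organize everything around the subdifferential correspondence $x \in \partial f^*(x^*) \Leftrightarrow x^* \in \partial f(x)$, valid for closed proper convex functions.

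First I would translate $\alpha$-strong convexity into strong monotonicity of the subdifferential. Writing the defining inequality at $x$ with a subgradient $x^* \in \partial f(x)$ and at $y$ with $y^* \in \partial f(y)$, and adding the two, the function values cancel and one obtains $\langle x^* - y^*,\, x - y\rangle \ge \alpha\|x-y\|_2^2$. Next I would establish that $\nabla f^*$ is genuinely a gradient: since $f$ is finite-valued and $\alpha$-strongly convex it is coercive and strictly convex, so for every dual point $x^*$ the supremum defining $f^*(x^*)$ is attained at a \emph{unique} maximizer $x$; this $x$ is precisely $\nabla f^*(x^*)$ and satisfies the Fenchel-Young optimality condition $x^* \in \partial f(x)$. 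Setting $x = \nabla f^*(x^*)$ and $y = \nabla f^*(y^*)$, the strong monotonicity inequality combined with Cauchy-Schwarz gives $\alpha\|x-y\|_2^2 \le \langle x^* - y^*, x-y\rangle \le \|x^*-y^*\|_2\,\|x-y\|_2$, from which $\|\nabla f^*(x^*) - \nabla f^*(y^*)\|_2 \le \tfrac{1}{\alpha}\|x^*-y^*\|_2$ follows at once.

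For the second assertion I would use the integral form of the fundamental theorem of calculus along the segment joining $x^*$ and $y^*$, writing $f^*(y^*) - f^*(x^*) - \langle \nabla f^*(x^*),\, y^*-x^*\rangle = \int_0^1 \langle \nabla f^*(x^* + t(y^*-x^*)) - \nabla f^*(x^*),\, y^*-x^*\rangle\,dt$, and bounding the integrand by the Lipschitz estimate just proved; this produces the factor $\int_0^1 t\,dt = \tfrac12$ and hence the constant $\tfrac{1}{2\alpha}$. The main obstacle is the differentiability claim in the first part: one must argue that $f^*$ is everywhere differentiable rather than merely subdifferentiable, which is exactly where strict convexity of $f$ (forcing uniqueness of the conjugate maximizer) and the subdifferential correspondence enter. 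Once differentiability is secured, both the Lipschitz bound and the descent inequality are routine.
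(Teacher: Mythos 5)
The paper itself offers no proof of this lemma: it is imported verbatim from Rockafellar and Wets \cite{rockafellar1998variational}, so there is no internal argument to compare against. Your proposal is a correct, self-contained proof of the classical fact, and it follows the standard textbook route. The chain of reasoning is sound at every step: (i) adding the two strong-convexity inequalities at $x$ and $y$ gives strong monotonicity $\langle x^*-y^*,x-y\rangle \ge \alpha\|x-y\|_2^2$ of $\partial f$; (ii) since $f$ is finite-valued and $\alpha$-strongly convex, the function $y\mapsto f(y)-\langle x^*,y\rangle$ is coercive and strictly convex, so the conjugate supremum is attained at a unique point, and the correspondence $x\in\partial f^*(x^*)\Leftrightarrow x^*\in\partial f(x)$ then forces $\partial f^*(x^*)$ to be a singleton, i.e.\ $f^*$ is differentiable with $\nabla f^*(x^*)$ equal to that maximizer; (iii) plugging $x=\nabla f^*(x^*)$, $y=\nabla f^*(y^*)$ into strong monotonicity and applying Cauchy--Schwarz yields the $1/\alpha$-Lipschitz bound; (iv) the quadratic upper bound follows from the integral form of the fundamental theorem of calculus plus the Lipschitz estimate, producing the factor $\int_0^1 t\,dt=\tfrac12$. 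Two small points you should state explicitly to make the argument airtight: the subdifferential correspondence in (ii) requires $f$ to be closed proper convex, which holds here automatically because a finite convex function on $\mathbb{R}^n$ is continuous; and in (iv) the integral representation is legitimate because $\nabla f^*$, being Lipschitz, is continuous along the segment, so $t\mapsto f^*(x^*+t(y^*-x^*))$ is $C^1$. With those remarks added, your proof is complete and could replace the citation.
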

\begin{definition}[Bregman distance \cite{bregman1967relaxation}]
Let $f$ be a strongly convex function and $x^* \in \partial f(x)$ for some $x \in \mathbb{R}^n$. The \emph{Bregman distance} between $x$ and $y \in \mathbb{R}^n$ with respect to $f$ and $x^*$ is defined as
\begin{equation} \label{eq:Bregman distance}
    D_{f}^{x^{*}}(x,y)\stackrel{\text { def }}{=}{f(y)-f(x)- \langle x^{*},y-x \rangle } =f^{*}(x^{*})- \langle x^{*},y \rangle +f(y).
\end{equation}
\end{definition}

\begin{lemma}[\cite{schopfer2019linear}] \label{lem:Euclid-Bregman}
Let $f:\mathbb{R}^n \rightarrow \mathbb{R}$ be $\alpha$-strongly convex. Then, for all
$x,y \in \mathbb{R}^n$ and any subgradient $x^* \in \partial f(x)$, the associated
Bregman distance satisfies
$$ D _ { f } ^ { x^* } ( x , y ) \geq \frac { \alpha } { 2 } \| x - y \| _ { 2 } ^ { 2 }.$$
Moreover, $D _ { f } ^ { x^* } ( x , y )=0$ if and only if $x=y$.
\end{lemma}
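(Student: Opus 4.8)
The plan is to obtain the inequality as an immediate algebraic rearrangement of the defining inequality for $\alpha$-strong convexity, and then to deduce the equivalence from the resulting lower bound together with the strict positivity of $\alpha$.

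First I would invoke the definition of $\alpha$-strong convexity directly. Since $f$ is $\alpha$-strongly convex, for the given point $x$ and any subgradient $x^* \in \partial f(x)$ we have
$$f(y) \geq f(x) + \langle x^*, y - x \rangle + \frac{\alpha}{2}\,\|y - x\|_2^2, \qquad \forall y \in \mathbb{R}^n.$$
Moving the first two terms on the right-hand side to the left isolates precisely the expression appearing in the definition \eqref{eq:Bregman distance} of the Bregman distance, namely $f(y) - f(x) - \langle x^*, y - x \rangle = D_f^{x^*}(x,y)$. This yields $D_f^{x^*}(x,y) \geq \tfrac{\alpha}{2}\|x - y\|_2^2$ at once, which is the claimed inequality.

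For the equivalence, the forward direction is immediate: setting $y = x$ in \eqref{eq:Bregman distance} gives $D_f^{x^*}(x,x) = f(x) - f(x) - \langle x^*, 0 \rangle = 0$. Conversely, if $D_f^{x^*}(x,y) = 0$, then the inequality just established forces $0 \geq \tfrac{\alpha}{2}\|x - y\|_2^2 \geq 0$, so that $\|x - y\|_2^2 = 0$; because $\alpha > 0$ by hypothesis, this can hold only when $x = y$.

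I do not expect any genuine obstacle here: the statement is essentially a restatement of the strong-convexity inequality in the language of the Bregman distance, and both halves of the equivalence drop out once the lower bound is in place. The only point deserving (minimal) attention is that the strict positivity of $\alpha$ is exactly what upgrades the implication ``$x = y \Rightarrow D_f^{x^*}(x,y) = 0$'' to a full equivalence, since without it the quadratic term could vanish without forcing $x = y$.
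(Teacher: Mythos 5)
Your proof is correct and is exactly the standard argument: the paper states this lemma as a cited result from the literature (without reproducing a proof), and the underlying proof there is precisely the rearrangement of the $\alpha$-strong convexity inequality into the Bregman-distance form, plus the observation that $\alpha>0$ forces $x=y$ when the distance vanishes. Nothing is missing; the identification $D_f^{x^*}(x,y)=f(y)-f(x)-\langle x^*,y-x\rangle$ and both directions of the equivalence are handled correctly.
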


\section{The Proposed Methods} \label{sec:RABEBK-a}
In our recent work \cite{dong2025relaxed}, we introduced the randomized averaging block extended Bregman-Kaczmarz (RABEBK) method and its constant-relaxation variant (cRABEBK) for solving the combined optimization problem \eqref{eq:main}. While these methods already exhibit improved convergence behaviour by exploiting block-averaged projections, their performance is still influenced by the choice of relaxation parameters.

The objective of the present work is to further enhance the efficiency of the RABEBK framework by incorporating iteration-dependent adaptive relaxation parameters. Motivated by extrapolation-based acceleration strategies \cite{necoara2019faster}, we design an adaptive scheme that dynamically adjusts the relaxation parameters according to the iteration-wise residual information, enabling more aggressive yet stable progress at each iteration.

Before introducing the proposed adaptive randomized averaging block extended Bregman-Kaczmarz method, we first briefly review the RABEBK method and its constant-relaxation variant cRABEBK for completeness.

\subsection{Review of the RABEBK Method and Its Constant-Relaxation Variant}
\label{sec:RABEBK-d}
The randomized averaging block extended Bregman-Kaczmarz (RABEBK) method and its constant-relaxation variant (cRABEBK) form the foundation for the adaptive scheme developed in this work. They extend the classical Kaczmarz framework to a broad class of combined optimization problems by integrating randomized averaging block updates with extended Bregman iterations.

The key idea of the RABEBK method lies in embedding the extended Bregman-Kaczmarz iteration into a randomized block-averaging strategy. At each iteration, multiple row and column blocks are selected at random, and the corresponding extended Bregman updates are computed and aggregated to form a single update direction. This construction allows the method to exploit block-wise randomization while incorporating the geometry induced by convex regularization via Bregman distances, thereby yielding an efficient, scalable, and structure-preserving iterative algorithm.

In what follows, we formulate the RABEBK method for solving the combined optimization problem \eqref{eq:main}.
We begin by considering the auxiliary problem
\begin{equation} \label{eq:SLTW17}
    \min_{y \in \mathcal{R}(A)} g^*(b - y).
\end{equation}
Under the assumption that $g$ is strongly convex, it is straightforward to verify that the dual problem of \eqref{eq:SLTW17} takes the form
\begin{equation} \label{eq:BK01}
    \min_{z \in \mathbb{R}^m} h(z): = g(z)-b^\top z \quad \mathrm{s.t.}\quad A^\top z=0.
\end{equation}
Moreover, the strong convexity of $g$ guarantees zero duality gap between the primal-dual pair~\eqref{eq:SLTW17}-\eqref{eq:BK01}.

Applying the randomized averaging block variant of the BK scheme
\eqref{eq:Bregman-Kaczmarz2} to the constrained dual problem~\eqref{eq:BK01}
yields the following iterative updates:
\begin{equation} \label{eq:BK03}
    \begin{split}
        &\tilde{z}_{k+1}^* = \tilde{z}_k^* -\frac{1} {\lVert A_{:,\mathcal{J}_{j_k}}\rVert^2_F} A_{:,\mathcal{J}_{j_k}}(A_{:,\mathcal{J}_{j_k}})^\top z_{k}, \\
        &z_{k+1} = \nabla h^*(\tilde{z}_{k+1}^*), 
    \end{split}
 \end{equation}
with initialization \( \tilde{z}_0^* = 0 \). At each iteration $k$, the column block $\mathcal{J}_{j_k}$ is selected independently according to the probability distribution $\mathbb{P}(\mathcal{J}_{j_k})=\|A_{:,\mathcal{J}_{j_k}}\|_F^2/\|A\|_F^2$.

Noting that $$ \nabla h^*(\tilde{z}_k^*) = \nabla g^*(\tilde{z}_k^* + b),$$ we introduce the shifted dual variable \( z_k^* := \tilde{z}_k^* + b \), which allows us to rewrite \eqref{eq:BK03} equivalently as
\begin{equation} \label{eq:BK04}
    \begin{split}
        &z_{k+1}^* = z_{k}^*-\frac{1} {\lVert A_{:,\mathcal{J}_{j_k}}\rVert^2_F} A_{:,\mathcal{J}_{j_k}}(A_{:,\mathcal{J}_{j_k}})^\top z_{k}, \\
        &z_{k+1} = \nabla g^*(z_{k+1}^*), 
    \end{split}
 \end{equation}
with initialization \( z_0^* = b \).

On the other hand, given $\hat{y} = b-z^*$, the original combined optimization problem
\eqref{eq:main} requires solving
\begin{equation} \label{eq:BK02}
    \min_{ x\in \mathbb{R}^n} f(x) \quad \mathrm{s.t.}\quad Ax=\hat{y},
\end{equation}
which can be addressed by using the randomized averaging block Bregman-Kaczmarz
scheme in a manner analogous to \eqref{eq:BK03}.

Consequently, solving the combined optimization problem \eqref{eq:main} can be interpreted as an alternating solution process that applies randomized averaging block Bregman-Kaczmarz updates to two constrained subproblems, namely~\eqref{eq:BK01} and~\eqref{eq:BK02}, which are coupled through the relation $b = \hat{y} + z^*$. From this alternating primal-dual perspective, a natural algorithmic realization leads to the \emph{randomized averaging block extended Bregman-Kaczmarz} (RABEBK) method.

We now arrive at the procedure of the RABEBK method. Given initialization $z_0^{*}=b$, $z_0=\nabla g^*(z_0^*)$, $x_0^{*}=0$ and $x_0=\nabla f^{*}(x_0^*)$. At the $k$-th iteration, RABEBK performs the updates:
\begin{equation} \label{eq:RABEBK_scheme}
    \begin{aligned}
   z_{k+1}^*&=  z_{k}^*-\sum_{l\in\mathcal{J}_{j_k}}\omega_{l}\frac{(  A_{:,l})^\top   z_{k}}{\|A_{:,l}\|_2^2}  A_{:,l},\quad \omega_l=\frac{ \|A_{:,l}\|_2^2}{\|  A_{:,\mathcal{J}_{j_k}}\|_F^2},\\
   z_{k+1}&= \nabla g^*(z_{k+1}^*);\\
   x_{k+1}^*&= x_{k}^*-\sum_{q\in\mathcal{I}_{i_k}}\omega_q\frac{  A_{q,:}  x_k-  b_q+  z^*_{k+1,q}}{  \|A_{q,:}\|_2^2 }(  A_{q,:})^\top, \quad \omega_q=\frac{  \|A_{q,:}\|_2^2}{\|  A_{\mathcal{I}_{i_k},:}\|_F^2},\\
     x_{k+1}&= \nabla f^*(x_{k+1}^*).
    \end{aligned}
\end{equation}
Here, the column block  \(\mathcal{J}_{j_k}\) and the row block \(\mathcal{I}_{i_k}\) are selected at random, with probabilities proportional to the Frobenius norms of the associated submatrices, namely,
$$\mathbb{P}(\mathcal{J}_{j_k})=\frac{\|A_{:,\mathcal{J}_{j_k}}\|_F^2}{\|A\|_F^2},
\qquad
\mathbb{P}(\mathcal{I}_{i_k})=\frac{\|A_{\mathcal{I}_{i_k},:}\|_F^2}{\|A\|_F^2}.
$$
which assign higher sampling probability to blocks with larger Frobenius norms.

To further enhance the numerical stability and convergence behavior of RABEBK, two constant relaxation parameters, corresponding respectively to the dual and primal updates, were introduced in \cite{dong2025relaxed}, leading to the \emph{constant-relaxation randomized averaging block extended Bregman-Kaczmarz} (cRABEBK) method.  The resulting iterative scheme is given by
\begin{equation}\label{eq:rRABEBK_scheme}
    \begin{aligned}
   z_{k+1}^*&=  z_{k}^*-\alpha^{(z)}\left(\sum_{l\in\mathcal{J}_{j_k}}\omega_{l}\frac{(  A_{:,l})^\top   z_{k}}{\|A_{:,l}\|_2^2}  A_{:,l}\right),\quad \omega_l=\frac{ \|A_{:,l}\|_2^2}{\|  A_{:,\mathcal{J}_{j_k}}\|_F^2},\\
   z_{k+1}&= \nabla g^*(z_{k+1}^*);\\
   x_{k+1}^*&= x_{k}^*-\alpha^{(x)}\left(\sum_{q\in\mathcal{I}_{i_k}}\omega_q\frac{  A_{q,:}  x_k-  b_q+  z^*_{k+1,q}}{  \|A_{q,:}\|_2^2 }(  A_{q,:})^\top \right), \quad \omega_q=\frac{  \|A_{q,:}\|_2^2}{\|  A_{\mathcal{I}_{i_k},:}\|_F^2},\\
     x_{k+1}&= \nabla f^*(x_{k+1}^*).
    \end{aligned}
\end{equation}
In \cite{dong2025relaxed}, the constant relaxation parameters associated with the dual and primal updates are chosen, respectively, as
$$ \alpha^{(z)} = \frac{\mu_g}{\beta^{\mathcal{J}}_{\max}}\quad\text{and}\quad   \alpha^{(x)} = \frac{\mu_f}{\beta^{\mathcal{I}}_{\max}}, $$ 
where  \begin{equation}\label{eq: beta_def}
\beta^{\mathcal{I}}_{\max} := \max_{i_k \in [s]} \frac{\sigma^2_{\max} \left( A_{\mathcal{I}_{i_k},:} \right)}{\| A_{\mathcal{I}_{i_k},:} \|^2_F} \quad\text{and}\quad \beta^{\mathcal{J}}_{\max} := \max_{j_k \in [t]} \frac{\sigma^2_{\max} \left( A_{:,\mathcal{J}_{j_k}} \right)}{\| A_{:,\mathcal{J}_{j_k}} \|^2_F}.
\end{equation}
Here, $\mu_f$ and $\mu_g$ denote the strong convexity constants of $f$ and $g$, respectively, and $\sigma_{\max}(\cdot)$ denotes the largest singular value.

Although the constant-relaxation strategy leads to a simple and easily implementable scheme, its convergence behaviour may vary significantly across different problem instances, since the relaxation parameter remains fixed throughout the iterations. To address this limitation, we develop an adaptive relaxation strategy in which the parameter is dynamically updated based on the current residual information. The design and analysis of this adaptive approach constitute the main focus of the next subsection.

\subsection{Adaptive RABEBK method} \label{sec:RABEBK-ada}
While the introduction of constant relaxation parameters significantly improves the robustness and convergence behaviour of RABEBK in practice, the performance of the cRABEBK method remains highly sensitive to the choice of these parameters. In particular, the theoretically admissible values of $\alpha^{(z)}$ and $\alpha^{(x)}$ depend on global spectral quantities such as $\beta^{\mathcal{J}}_{\max}$ and $\beta^{\mathcal{I}}_{\max}$, which are often difficult to estimate accurately for large-scale or highly structured problems. Moreover, relying on conservative global bounds may lead to overly cautious step sizes, thereby limiting the practical efficiency of the algorithm.

Motivated by these observations and inspired by recent advances in extrapolation-based
acceleration techniques for block Kaczmarz-type methods \cite{necoara2019faster},
we propose to replace the constant relaxation parameters in cRABEBK with
iteration-dependent adaptive relaxation parameters. The resulting method, termed the
\emph{adaptive randomized averaging block extended Bregman-Kaczmarz} (aRABEBK) method,
automatically adjusts the relaxation parameters using locally available residual
information. This mechanism allows for more aggressive updates whenever permitted by the local
geometry, while maintaining numerical stability. As a result, the need for a priori parameter tuning is alleviated, and substantially accelerated convergence is observed in practice.

Specifically, the constant-relaxed iteration~\eqref{eq:rRABEBK_scheme} can be enhanced by replacing the constant relaxation parameters $\alpha^{(z)}$ and $\alpha^{(x)}$ with two adaptive relaxation parameters, leading to the following iteration scheme:
\begin{equation}\label{eq:aRABEBK_scheme}
    \begin{aligned}
   z_{k+1}^*&=  z_{k}^*-\alpha_k^{(z)}\left(\sum_{l\in\mathcal{J}_{j_k}}\omega_{l}\frac{(  A_{:,l})^\top   z_{k}}{\|A_{:,l}\|_2^2}  A_{:,l}\right),\quad \omega_l=\frac{ \|A_{:,l}\|_2^2}{\|  A_{:,\mathcal{J}_{j_k}}\|_F^2},\\
   z_{k+1}&= \nabla g^*(z_{k+1}^*);\\
   x_{k+1}^*&= x_{k}^*-\alpha_k^{(x)}\left(\sum_{q\in\mathcal{I}_{i_k}}\omega_q\frac{  A_{q,:}  x_k-  b_q+  z^*_{k+1,q}}{  \|A_{q,:}\|_2^2 }(  A_{q,:})^\top \right), \quad \omega_q=\frac{  \|A_{q,:}\|_2^2}{\|  A_{\mathcal{I}_{i_k},:}\|_F^2},\\
     x_{k+1}&= \nabla f^*(x_{k+1}^*).
    \end{aligned}
\end{equation}
Here, \(\alpha_k^{(z)}\) and \(\alpha_k^{(x)}\) are iteration-dependent relaxation
parameters that are updated adaptively as the algorithm proceeds.

Now, a key question concerns how to choose the relaxation parameters in a principled
and effective manner. As shown earlier, solving the combined optimization problem \eqref{eq:main} can be decomposed into two constrained subproblems, namely \eqref{eq:BK01} and
\eqref{eq:BK02}, associated with the auxiliary variable $z$ and the primary variable
$x$, respectively. It therefore naturally motivates the introduction of two distinct adaptive relaxation
parameters: $\alpha_k^{(z)}$ for the update of $z_{k+1}$ corresponding to \eqref{eq:BK01}, and $\alpha_k^{(x)}$ for the update of $x_{k+1}$ corresponding to \eqref{eq:BK02}.

To this end, we adopt an extrapolation-based stepsize strategy inspired by
\cite{necoara2019faster}, originally developed for consistent linear systems, and extend
it here to the present block-structured and regularized setting. The key idea is to choose the relaxation parameters by exploiting locally available residual information, with the goal of adaptively balancing the magnitude of the blockwise update and improving the robustness of the iteration.

Specifically, the adaptive relaxation parameters in our aRABEBK method are defined as
  \begin{equation} \label{eq:adaptive_z}
   \alpha_k^{(z)}=   \delta_z \frac{ \lVert A_{:,\mathcal{J}_{j_k}}\rVert^2_F\|r_k^{(z)}\|_2^2}{ \|A_{:,\mathcal{J}_{j_k}}r_k^{(z)}\|_2^2}, \qquad r_k^{(z)}=(A_{:,\mathcal{J}_{j_k}})^\top z_{k},
\end{equation}
and
\begin{equation}\label{eq:adaptive_x}
\alpha_k^{(x)}= \delta_x \frac{ \lVert( A_{\mathcal{I}_{i_k},:})^\top\rVert^2_F \|r_k^{(x)}\|_2^2}{\| (A_{\mathcal{I}_{i_k},:})^\top r_k^{(x)}\|^2_2}, \qquad r_k^{(x)}=b_{\mathcal{I}_{i_k}}-A_{\mathcal{I}_{i_k},:}x_{k} -z_{k+1,\mathcal{I}_{i_k}}^*,
\end{equation}
where $\delta_z>0$ and $\delta_x>0$ are user-specified scaling parameters that control
the overall aggressiveness of the extrapolation.

Moreover, the choices in \eqref{eq:adaptive_z}--\eqref{eq:adaptive_x} admit an interpretation in terms of exact line search:
when $\delta_z=\delta_x=1$, they provide an upper bound on the corresponding exact line search
stepsizes for the local quadratic residual models, and hence lead to more aggressive
updates. We formalize this connection in the following proposition \ref{prop:exact_line_search_aRABEBK}.

\begin{proposition}[Exact line search in block form] \label{prop:exact_line_search_aRABEBK}
Let $\mathcal{J}_{j_k}$ and $\mathcal{I}_{i_k}$ be the randomly selected column and row blocks at iteration $k$.
Define the auxiliary and primary residuals
\[
r_k^{(z)} := (A_{:,\mathcal{J}_{j_k}})^\top z_k,
\qquad
r_k^{(x)} := b_{\mathcal{I}_{i_k}} - A_{\mathcal{I}_{i_k},:}x_k - z^{*}_{k+1,\mathcal{I}_{i_k}} .
\]
Consider the one-dimensional line search for the local quadratic residual model
\[
\min_{\alpha>0}\ \phi_z(z_k-\alpha d_k^{(z)})
=\frac12\bigl\|(A_{:,\mathcal{J}_{j_k}})^\top(z_k-\alpha d_k^{(z)})\bigr\|_2^2,
\]
where $d_k^{(z)}$ is the corresponding block-averaged update direction. Then the minimizer is attained at
\begin{equation} \label{eq:exact_line_search_19}
\alpha_{\mathrm{ex}}^{(z)}
=
\frac{\bigl\|A_{:,\mathcal{J}_{j_k}}\,r_k^{(z)}\bigr\|_2^{2}\,\bigl\|A_{:,\mathcal{J}_{j_k}}\bigr\|_F^{2}}
{\bigl\|(A_{:,\mathcal{J}_{j_k}})^\top A_{:,\mathcal{J}_{j_k}}\,r_k^{(z)}\bigr\|_2^{2}}.
\end{equation}
Analogously, the exact line search stepsize for the primary residual model is given by
\begin{equation}\label{eq:exact_line_search_20}
\alpha_{\mathrm{ex}}^{(x)}
= 
\frac{\bigl\|(A_{\mathcal{I}_{i_k},:})^\top r_k^{(x)}\bigr\|_2^{2}\,\bigl\|A_{\mathcal{I}_{i_k},:}\bigr\|_F^{2}}
{\bigl\|A_{\mathcal{I}_{i_k},:}(A_{\mathcal{I}_{i_k},:})^\top r_k^{(x)}\bigr\|_2^{2}}.
\end{equation}
\end{proposition}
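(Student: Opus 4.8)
The plan is to recognize each of the two line-search problems as a one-dimensional quadratic minimization in the scalar $\alpha$, so that the claimed minimizers follow immediately from setting the derivative to zero. The only substantive preparation is to extract the block-averaged direction $d_k^{(z)}$ (respectively $d_k^{(x)}$) in closed matrix-vector form from the RABEBK scheme \eqref{eq:RABEBK_scheme}.

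For the auxiliary update, write $B:=A_{:,\mathcal{J}_{j_k}}$. Substituting the weights $\omega_l=\|A_{:,l}\|_2^2/\|B\|_F^2$ into the dual-update term of \eqref{eq:RABEBK_scheme} makes the norm factors cancel, and the resulting sum is exactly the column expansion of $BB^\top z_k$:
\[
d_k^{(z)}=\sum_{l\in\mathcal{J}_{j_k}}\omega_l\frac{(A_{:,l})^\top z_k}{\|A_{:,l}\|_2^2}A_{:,l}
=\frac{1}{\|B\|_F^2}\sum_{l\in\mathcal{J}_{j_k}}\bigl((A_{:,l})^\top z_k\bigr)A_{:,l}
=\frac{1}{\|B\|_F^2}BB^\top z_k=\frac{1}{\|B\|_F^2}B\,r_k^{(z)},
\]
using $r_k^{(z)}=B^\top z_k$ in the last step. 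I would then insert this into the model via $B^\top(z_k-\alpha d_k^{(z)})=r_k^{(z)}-(\alpha/\|B\|_F^2)B^\top B\,r_k^{(z)}$, so that $\phi_z(\alpha)=\tfrac12\|r_k^{(z)}-(\alpha/\|B\|_F^2)B^\top B\,r_k^{(z)}\|_2^2$ is an explicit quadratic in $\alpha$.

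Differentiating $\phi_z$ in $\alpha$ and equating to zero gives $\alpha=\|B\|_F^2\,(r_k^{(z)})^\top B^\top B\,r_k^{(z)}/\|B^\top B\,r_k^{(z)}\|_2^2$. Recognizing the quadratic form $(r_k^{(z)})^\top B^\top B\,r_k^{(z)}=\|B\,r_k^{(z)}\|_2^2$ and restoring $B=A_{:,\mathcal{J}_{j_k}}$ yields precisely \eqref{eq:exact_line_search_19}. The primary update is entirely parallel with $C:=A_{\mathcal{I}_{i_k},:}$ in place of $B^\top$: the same weight cancellation collapses the row-averaged direction to a multiple of $C^\top r_k^{(x)}/\|C\|_F^2$, the residual model becomes $\tfrac12\|r_k^{(x)}-(\alpha/\|C\|_F^2)CC^\top r_k^{(x)}\|_2^2$, and the first-order condition together with $(r_k^{(x)})^\top CC^\top r_k^{(x)}=\|C^\top r_k^{(x)}\|_2^2$ and $\|C^\top\|_F=\|C\|_F$ produces \eqref{eq:exact_line_search_20}.

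I expect the main obstacle to lie entirely in the first step, namely the bookkeeping that collapses the norm-weighted block sums into the clean factors $BB^\top$ and $CC^\top$, and in keeping the transpose and sign conventions consistent between the column-block case (where $r_k^{(z)}=B^\top z_k$ enters the direction directly) and the row-block case (where $r_k^{(x)}=b_{\mathcal{I}_{i_k}}-A_{\mathcal{I}_{i_k},:}x_k-z^*_{k+1,\mathcal{I}_{i_k}}$ carries an extra sign relative to the numerator of the update term). Once the direction is in matrix form, the scalar minimization is a routine calculation.
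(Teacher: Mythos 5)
Your proposal is correct and follows essentially the same route as the paper's proof: collapse the norm-weighted block sum into the closed form $d_k^{(z)}=A_{:,\mathcal{J}_{j_k}}(A_{:,\mathcal{J}_{j_k}})^\top z_k/\|A_{:,\mathcal{J}_{j_k}}\|_F^2$, expand the residual model as a scalar quadratic in $\alpha$, set the derivative to zero, and simplify via $\langle r_k^{(z)},(A_{:,\mathcal{J}_{j_k}})^\top A_{:,\mathcal{J}_{j_k}}r_k^{(z)}\rangle=\|A_{:,\mathcal{J}_{j_k}}r_k^{(z)}\|_2^2$. The only cosmetic difference is that you substitute the explicit direction before minimizing while the paper substitutes it after deriving the first-order condition, and your explicit sign bookkeeping for the $x$-block is consistent with the paper's analogous treatment.
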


\begin{proof}
We first derive \eqref{eq:exact_line_search_19} for $\alpha_{\mathrm{ex}}^{(z)}$; the derivation of \eqref{eq:exact_line_search_20}
follows by an analogous argument.

For the selected column block $\mathcal{J}_{j_k}$, consider the quadratic residual function
\[
\phi_z(z) := \frac12\bigl\|(A_{:,\mathcal{J}_{j_k}})^\top z\bigr\|_2^2,
\]
which measures the violation of the block consistency condition $(A_{:,\mathcal{J}_{j_k}})^\top z=0$.

In view of the update scheme \eqref{eq:aRABEBK_scheme}, the block-averaged auxiliary update can be written as
\[
z_{k+1}^* = z_k^* - \alpha\, d_k^{(z)},
\qquad
d_k^{(z)}=\sum_{l\in\mathcal{J}_{j_k}} \omega_l
\frac{(A_{:,l})^\top z_k}{\|A_{:,l}\|_2^2}\,A_{:,l}
= \frac{A_{:,\mathcal{J}_{j_k}}(A_{:,\mathcal{J}_{j_k}})^\top z_k}{\|A_{:,\mathcal{J}_{j_k}}\|_F^2},
\]
where $\omega_l=\|A_{:,l}\|_2^2/\|A_{:,\mathcal{J}_{j_k}}\|_F^2$.

Minimizing $\phi_z$ along the direction $d_k^{(z)}$ yields
\[
\min_{\alpha>0}\ \phi_z(z_k-\alpha d_k^{(z)})
=\frac12\bigl\|(A_{:,\mathcal{J}_{j_k}})^\top(z_k-\alpha d_k^{(z)})\bigr\|_2^2.
\]
Expanding the right-hand side gives
\[
\phi_z(z_k-\alpha d_k^{(z)})
=\frac12\|r_k^{(z)}\|_2^2
-\alpha\bigl\langle r_k^{(z)},(A_{:,\mathcal{J}_{j_k}})^\top d_k^{(z)}\bigr\rangle
+\frac{\alpha^2}{2}\bigl\|(A_{:,\mathcal{J}_{j_k}})^\top d_k^{(z)}\bigr\|_2^2.
\]
Setting the derivative with respect to $\alpha$ to zero yields
\[
\alpha_{\mathrm{ex}}^{(z)}
=\frac{\bigl\langle r_k^{(z)},(A_{:,\mathcal{J}_{j_k}})^\top d_k^{(z)}\bigr\rangle}
{\bigl\|(A_{:,\mathcal{J}_{j_k}})^\top d_k^{(z)}\bigr\|_2^2}.
\]
Using the explicit form of $d_k^{(z)}$, we have
\[
(A_{:,\mathcal{J}_{j_k}})^\top d_k^{(z)}
=
\frac{(A_{:,\mathcal{J}_{j_k}})^\top A_{:,\mathcal{J}_{j_k}}(A_{:,\mathcal{J}_{j_k}})^\top z_k}
{\|A_{:,\mathcal{J}_{j_k}}\|_F^2}.
\]
Substituting the above expression and $r_k^{(z)}=(A_{:,\mathcal{J}_{j_k}})^\top z_k$ into the formula for $\alpha_{\mathrm{ex}}^{(z)}$
yields \eqref{eq:exact_line_search_19}. The expression \eqref{eq:exact_line_search_20} follows by applying the same argument to the primary
residual function
\[
\phi_x(x)=\frac12\bigl\|b_{\mathcal{I}_{i_k}}-A_{\mathcal{I}_{i_k},:}x-z_{k+1,\mathcal{I}_{i_k}}^*\bigr\|_2^2,
\]
along the corresponding block-averaged update direction induced by $A_{\mathcal{I}_{i_k},:}$.
\end{proof}

We compare the adaptive stepsize \eqref{eq:adaptive_z} with the exact line search stepsize \eqref{eq:exact_line_search_19}.
When $\delta_z=1$, they read
\[
\alpha_k^{(z)}=\frac{\|A_{:,\mathcal{J}_{j_k}}\|_F^{2}\,\|r_k^{(z)}\|_2^{2}}
{\|A_{:,\mathcal{J}_{j_k}}\,r_k^{(z)}\|_2^{2}},
\qquad
\alpha_{\mathrm{ex}}^{(z)}=\frac{\|A_{:,\mathcal{J}_{j_k}}\,r_k^{(z)}\|_2^{2}\,\|A_{:,\mathcal{J}_{j_k}}\|_F^{2}}
{\|(A_{:,\mathcal{J}_{j_k}})^\top A_{:,\mathcal{J}_{j_k}}\,r_k^{(z)}\|_2^{2}}.
\]
By the Cauchy--Schwarz inequality,
\[
\bigl\langle r_k^{(z)},(A_{:,\mathcal{J}_{j_k}})^\top A_{:,\mathcal{J}_{j_k}}\,r_k^{(z)}\bigr\rangle^{2}
\le \|r_k^{(z)}\|_2^{2}\,\|(A_{:,\mathcal{J}_{j_k}})^\top A_{:,\mathcal{J}_{j_k}}\,r_k^{(z)}\|_2^{2}.
\]
Noting that
\(
\langle r_k^{(z)},(A_{:,\mathcal{J}_{j_k}})^\top A_{:,\mathcal{J}_{j_k}}\,r_k^{(z)}\rangle
=\|A_{:,\mathcal{J}_{j_k}}\,r_k^{(z)}\|_2^{2},
\)
we obtain
\[
\|A_{:,\mathcal{J}_{j_k}}\,r_k^{(z)}\|_2^{4}
\le \|r_k^{(z)}\|_2^{2}\,\|(A_{:,\mathcal{J}_{j_k}})^\top A_{:,\mathcal{J}_{j_k}}\,r_k^{(z)}\|_2^{2}.
\]
Consequently, $\alpha_k^{(z)}\ge \alpha_{\mathrm{ex}}^{(z)}$, i.e., \eqref{eq:adaptive_z} with $\delta_z=1$ provides an upper bound on the exact
line search stepsize and thus yields an extrapolated update. More generally, the parameter $\delta_z$ serves as an explicit knob to control the
degree of extrapolation: choosing $\delta_z<1$ recovers a more conservative stepsize and, in particular, one may tune $\delta_z$ to match the exact line search stepsize, while $\delta_z>1$ leads to even more aggressive extrapolation.

From a practical perspective, such extrapolated steps often result in faster decay of the error in the early stage compared with the more conservative choice, since they take larger steps along the same block-averaged descent direction.
Moreover, the adaptive rule \eqref{eq:adaptive_z} is computationally cheaper than evaluating $\alpha_{\mathrm{ex}}^{(z)}$:
computing \eqref{eq:adaptive_z} only requires $A_{:,\mathcal{J}_{j_k}}r_k^{(z)}$, whereas \eqref{eq:exact_line_search_19} additionally involves
$(A_{:,\mathcal{J}_{j_k}})^\top(A_{:,\mathcal{J}_{j_k}}r_k^{(z)})$, thereby saving two matrix--vector products per iteration. The same discussion applies to the $x$-block. We illustrate this phenomenon on a toy example under exactly the same data generation protocol as in Section~\ref{sec:numerical experiments}.
\begin{figure}[!htbp]
\centering

\subfigure[relative error versus iterations]{
\begin{minipage}[t]{0.47\linewidth}\centering
\includegraphics[width=\linewidth]{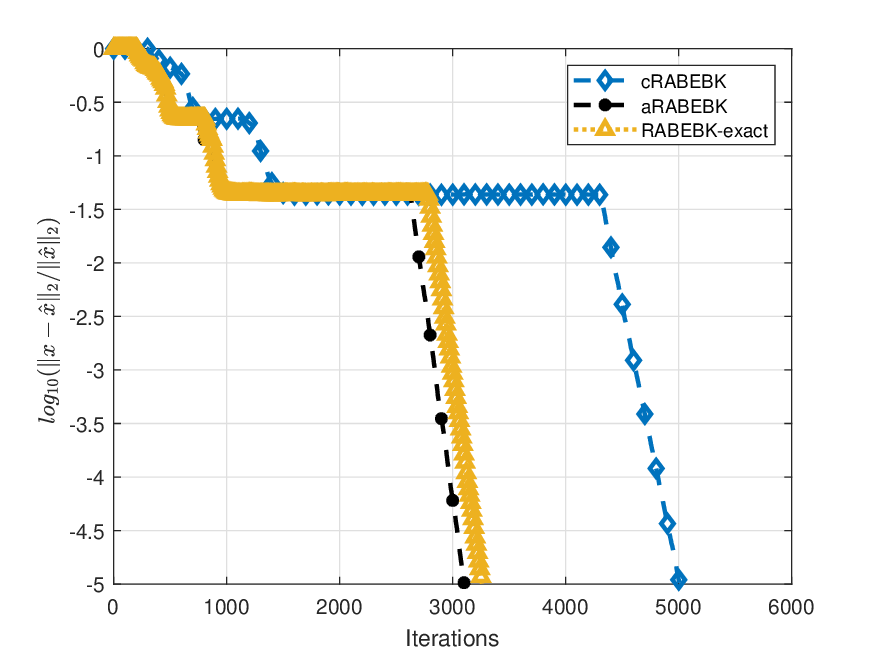}
\end{minipage}}
\hspace{0.5em}
\subfigure[relative error versus CPU time]{
\begin{minipage}[t]{0.47\linewidth}\centering
\includegraphics[width=\linewidth]{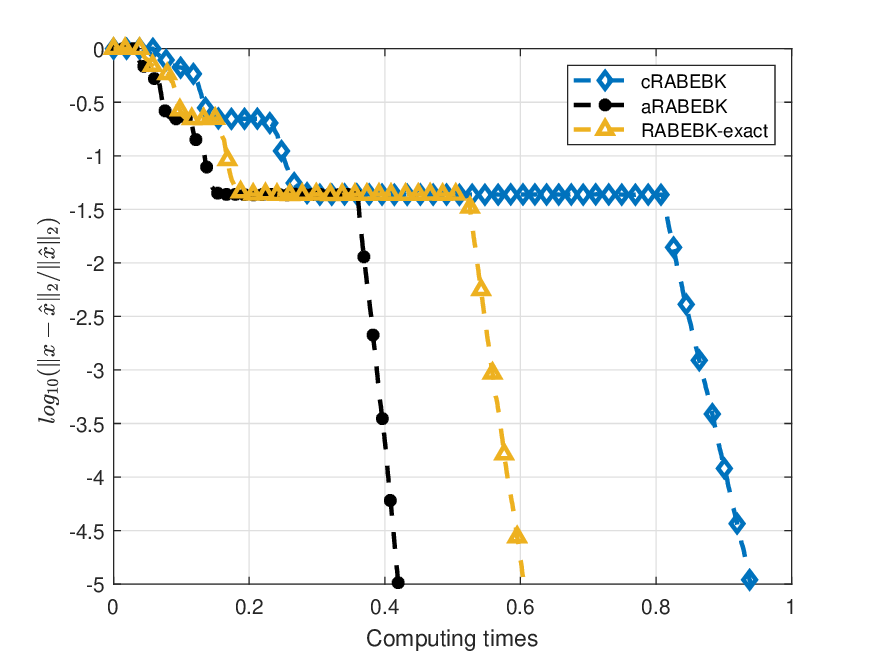}
\end{minipage}}

\subfigure[relaxation parameters on the $z$-block]{
\begin{minipage}[t]{0.47\linewidth}\centering
\includegraphics[width=\linewidth]{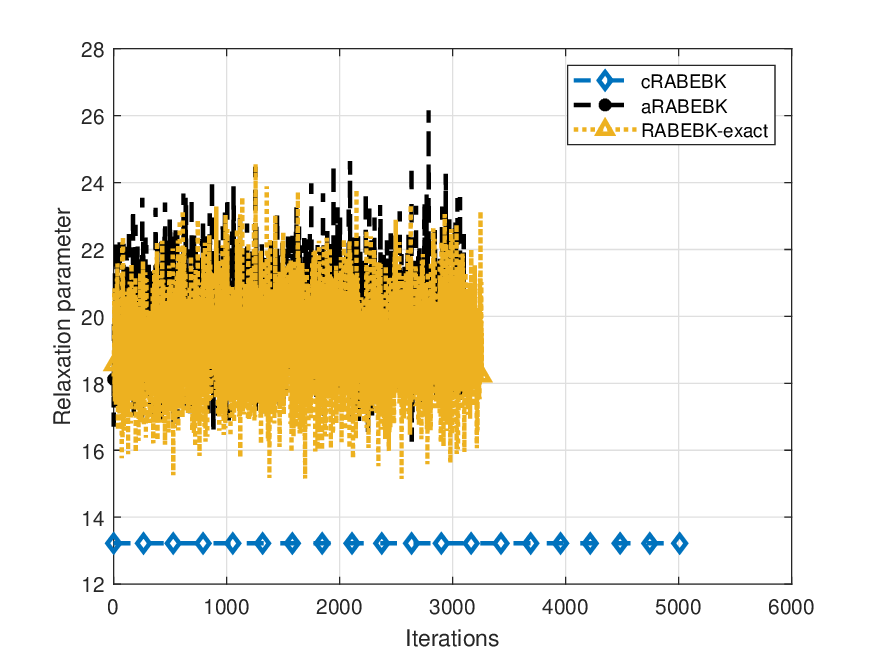}
\end{minipage}}
\hspace{0.5em}
\subfigure[relaxation parameters on the $x$-block]{
\begin{minipage}[t]{0.47\linewidth}\centering
\includegraphics[width=\linewidth]{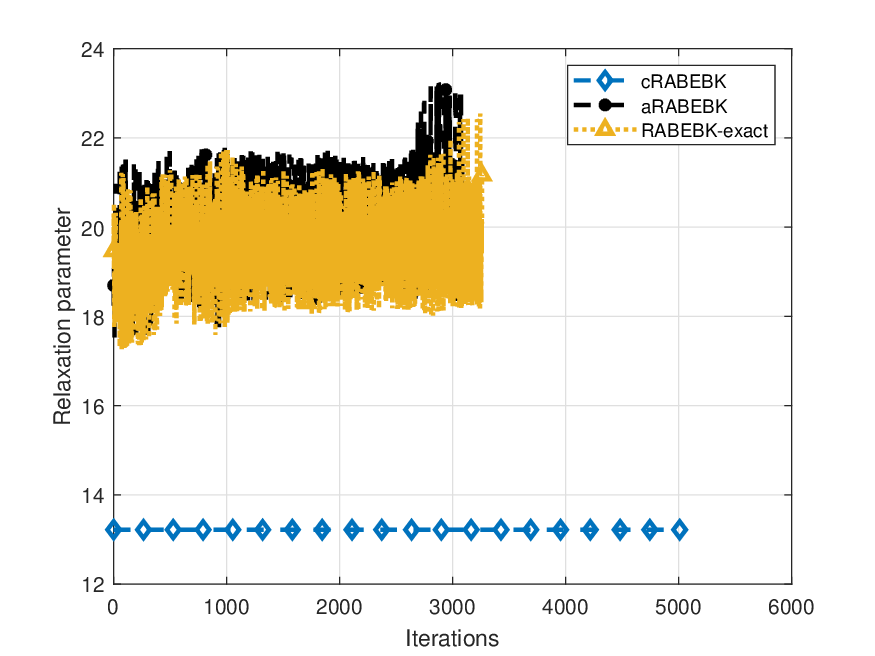}
\end{minipage}}

\caption{Gaussian sparse least-squares test under the same setting as Section~\ref{sec:numerical experiments}:
$A\in\mathbb{R}^{500\times 1000}$ with i.i.d.\ Gaussian entries; the ground-truth $\hat x$ has
$s=\lceil 0.01n\rceil$ nonzeros and $\lambda=5$.}
\label{fig:four}
\end{figure}

Figure~\ref{fig:four} compares the constant, adaptive, and exact line search relaxation strategies under the Gaussian sparse least-squares setting.
Panels (a) and (b) show that the proposed adaptive strategy converges faster than the constant-relaxation baseline and the exact line search variant,
both in terms of iteration counts and CPU time.
Panels (c) and (d) further reveal the mechanism behind this improvement: the adaptive relaxation parameters are consistently larger than the exact line search ones, whereas the constant relaxation remains relatively conservative.
This behavior is observed for both the auxiliary ($z$-block) and primary ($x$-block) updates, which explains the accelerated error decay of the adaptive method.

Based on the above discussion and the proposed adaptive relaxation strategy, we summarize the resulting \emph{adaptive randomized averaging block extended Bregman-Kaczmarz} (aRABEBK) method in Algorithm \ref{alg:aRABEBK}.

\begin{algorithm}[!htbp]
 \caption{A Randomized Averaging Block Extended Bregman-Kaczmarz Method with Adaptive
Relaxation (aRABEBK)} \label{alg:aRABEBK}
 	\begin{algorithmic}[1]
  \Require matrix $A\in \mathbb{R}^{m\times n}$, $b\in \mathbb{R}^m$ and the maximum iteration $T$. Set $x^{*}_{0}=0$, $x _ { 0} = \nabla f^{*}(x^{*}_{0})$, $z_0^{\ast}=b$ and $z_0=\nabla g^*(z_0^*)$. Select the adaptive relaxation parameter $\alpha^{(z)}_k$ and $\alpha^{(x)}_k$.
    \Ensure (approximate) solution of \par $\min_{x \in \mathbb{R}^n} f(x) \quad \text{s.t.} \quad A x = \hat{y}, \quad \text{where} \quad \hat{y} = \arg \min_{y \in \mathcal{R}(A)} g^*(b - y).$
  \State let $\left\{\mathcal{I}_1,\mathcal{I}_2,\cdots,\mathcal{I}_s \right\}$ and $\left\{\mathcal{J}_1,\mathcal{J}_2,\cdots,\mathcal{J}_t \right\}$
		be partitions of $[m]$ and $[n]$, respectively.
		 \For {$k=0, 1,\ldots,T $ }  
   \State select $j_k\in [t]$ according to probability ${\rm Pr(index}= j_k)= \frac{\|A_{:,\mathcal{J}_{j_k}}\|_F^2}{\|A\|_F^2}$ 
		\State $z_{k+1}^*= z_{k}^* - \delta_z \frac{ \|(A_{:,\mathcal{J}_{j_k}})^\top z_{k}\|_2^2 }{ \|A_{:,\mathcal{J}_{j_k}}(A_{:,\mathcal{J}_{j_k}})^\top z_{k}\|_2^2} A_{:,\mathcal{J}_{j_k}}(A_{:,\mathcal{J}_{j_k}})^\top z_{k}$
  \State  update $z_{k+1}=\nabla g^*(z_{k+1}^*)$
	   \State select $i_k\in [s]$ according to probability ${\rm Pr(index}= i_k)= \frac{\|A_{\mathcal{I}_{i_k},:}\|_F^2}{\|A\|_F^2}$ 
 \State set $ x_{k+1}^{\ast}=x_{k}^{\ast}- \delta_x \frac{\|b_{\mathcal{I}_{i_k}}-A_{\mathcal{I}_{i_k}}x_{k} -z_{k+1,\mathcal{I}_{i_k}}^*\|_2^2 }{\| (A_{\mathcal{I}_{i_k},:})^\top (b_{\mathcal{I}_{i_k}}-A_{\mathcal{I}_{i_k},:}x_{k} -z_{k+1,\mathcal{I}_{i_k}}^*)\|^2_2}(A_{\mathcal{I}_{i_k},:})^\top(A_{\mathcal{I}_{i_k},:}x_k-b_{\mathcal{I}_{i_k}}+z^*_{k+1,\mathcal{I}_{i_k}})$
    \State update  $ x_{k+1}=\nabla f^{\ast}( x_{k+1}^{\ast} )$.
    \State if a stopping criterion is satisfied, stop and \text{return} $x_{k+1}$.
 		\EndFor 
 	\end{algorithmic}
 \end{algorithm}
The adaptive relaxation parameters play a key role in the practical performance observed in our
numerical experiments, and their theoretical implications are analyzed in detail in the subsequent convergence section.

\section{Convergence Analysis}\label{sec:RABEBK-Convergence}
In this section, we investigate the convergence properties of the proposed aRABEBK method. Before presenting the main convergence results, we briefly recall several notions that will be used in the analysis, including \emph{calmness}, \emph{linear regularity}, and \emph{linear growth}, for
completeness; see, e.g.,~\cite{Schpfer2022ExtendedRK}. These properties provide a convenient framework for establishing linear convergence rates.

It is worth noting that the above regularity conditions are satisfied in a wide range of relevant settings, including the case where the function $f$ has a Lipschitz-continuous gradient, as well as when $f$ is a convex piecewise linear-quadratic function, such as
$$
f(x)=\lambda\|x\|_1+\tfrac{1}{2}\|x\|_2^2.
$$

\begin{definition} \label{def:2.2}
The (set-valued) subdifferential mapping $\partial f: \mathbb{R}^n \rightrightarrows \mathbb{R}^n$ is calm at $\hat{x}$ if there are constant $c,L>0$ such that 
\begin{equation*} \label{eq:calm}
    \partial f\,(x)\subset \partial f\,(\hat{x})+L \cdot \|x- \hat{x}\|_{2}\cdot B_{2}\quad \mathrm{for} \ \mathrm{any}\ x\ \mathrm{with}\  \|x- \hat{x}\|_{2}\leq c,
\end{equation*}
where $B_2$ denotes the closed unit ball of the $\ell_2$-norm.
\end{definition}

\begin{definition} \label{def:2.3}
    Let $\partial f(x)\cap \mathcal{R}(A^{\top})\neq \emptyset$. Then the collection $\{\partial f(\hat{x}), \mathcal{R}(A^{\top})\}$ is linearly regular, if there is a constant $\zeta>0$ such that for all $x^* \in \mathbb{R}^n$ we have 
\begin{equation*} \label{eq:linearly regular}
    \mathrm{dist}(x^{*}, \partial f(\hat{x})\cap \mathcal{R}(A^{\top}))\leq \zeta \cdot\left(\mathrm{dist}(x^{*}, \partial f(\hat{x}))+\mathrm{dist}(x^{*},\mathcal{R}(A^{\top}))\right).
\end{equation*}
\end{definition}

\begin{definition} \label{def:2.4}
    The subdifferential mapping of $f$ grows at most linearly, if there exists $\rho_1,\rho_2 \geq 0$ such that for all $x\in \mathbb{R}^n$ and $x^* \in \partial f(x)$ we have 
\begin{equation*} \label{eq:grows at most linearly}
    \| x ^ { * } \|_ 2 \leq \rho _ { 1 } \cdot \| x \| _ { 2 } + \rho _ { 2 }.
\end{equation*}
\end{definition}

\begin{theorem}[\cite{Schpfer2022ExtendedRK}, Theorem~3.9]\label{thm:theta}
    Consider the problem \eqref{eq:BK}, where the function $f:\mathbb{R}^n \rightarrow \mathbb{R}$  is strongly convex. If its subdifferential mapping grows at most linearly, is calm at the unique solution $\hat{x}$ of \eqref{eq:BK} and the collection $\{\partial f(\hat{x}), \mathcal{R}(A^{\top})\}$ is linearly regular, then there exists a constant $\theta(\hat{x})>0$ such that for all $x\in \mathbb{R}^n$ and $x^* \in  \partial f(x) \cap \mathcal{R}(A^{\top})$, the following global error bound holds:
\begin{equation} \label{eq:global error bound}
    D_{f}^{x^{*}}(x, \hat{x})\leq \frac{1}{\theta(\hat{x})}\cdot \|Ax-b\|_{2}^{2}.
\end{equation}
\end{theorem}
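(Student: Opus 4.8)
The plan is to bound the Bregman distance by the residual through a subgradient (dual) representation, splitting the argument into a neighborhood of $\hat{x}$ and its complement. First I would record the two-point identity: for any $\hat{x}^{*}\in\partial f(\hat{x})$ and $x^{*}\in\partial f(x)$,
\[
D_f^{x^{*}}(x,\hat{x})+D_f^{\hat{x}^{*}}(\hat{x},x)=\langle x^{*}-\hat{x}^{*},\,x-\hat{x}\rangle,
\]
which follows directly from the definition \eqref{eq:Bregman distance}. Since both Bregman distances are nonnegative, this gives $D_f^{x^{*}}(x,\hat{x})\le\langle x^{*}-\hat{x}^{*},\,x-\hat{x}\rangle$ for every such $\hat{x}^{*}$. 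I would then choose $\hat{x}^{*}\in\partial f(\hat{x})\cap\mathcal{R}(A^{\top})$, a set that is closed, convex, and nonempty (the last being exactly the standing assumption of Definition~\ref{def:2.3}, guaranteed by optimality of the unique solution $\hat{x}$). Because $x^{*}\in\mathcal{R}(A^{\top})$ by hypothesis, the difference satisfies $x^{*}-\hat{x}^{*}=A^{\top}\xi$ with $\|\xi\|_2\le\sigma_{\min}(A)^{-1}\|x^{*}-\hat{x}^{*}\|_2$; using $A\hat{x}=b$ then yields
\[
D_f^{x^{*}}(x,\hat{x})\le\langle\xi,\,Ax-b\rangle\le\sigma_{\min}(A)^{-1}\,\|x^{*}-\hat{x}^{*}\|_2\,\|Ax-b\|_2 .
\]
Thus the whole theorem reduces to controlling $\|x^{*}-\hat{x}^{*}\|_2$ by $\sqrt{D_f^{x^{*}}(x,\hat{x})}$.

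For $x$ in the calmness radius, $\|x-\hat{x}\|_2\le c$, I would take $\hat{x}^{*}$ to be the Euclidean projection of $x^{*}$ onto $\partial f(\hat{x})\cap\mathcal{R}(A^{\top})$, so that $\|x^{*}-\hat{x}^{*}\|_2=\mathrm{dist}\big(x^{*},\,\partial f(\hat{x})\cap\mathcal{R}(A^{\top})\big)$. Linear regularity (Definition~\ref{def:2.3}) bounds this by $\zeta\big(\mathrm{dist}(x^{*},\partial f(\hat{x}))+\mathrm{dist}(x^{*},\mathcal{R}(A^{\top}))\big)$; the second term vanishes since $x^{*}\in\mathcal{R}(A^{\top})$, while calmness (Definition~\ref{def:2.2}) bounds the first by $L\|x-\hat{x}\|_2$. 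Hence $\|x^{*}-\hat{x}^{*}\|_2\le\zeta L\|x-\hat{x}\|_2$, and Lemma~\ref{lem:Euclid-Bregman}, in the form $\|x-\hat{x}\|_2^2\le\frac{2}{\alpha}D_f^{x^{*}}(x,\hat{x})$, upgrades this to $\|x^{*}-\hat{x}^{*}\|_2^2\le\frac{2\zeta^2L^2}{\alpha}D_f^{x^{*}}(x,\hat{x})$. Substituting into the displayed inequality and cancelling one power of $\sqrt{D_f^{x^{*}}(x,\hat{x})}$ gives the bound locally, with $\theta_{\mathrm{loc}}=\frac{\alpha\,\sigma_{\min}(A)^2}{2\zeta^2L^2}$.

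For $x$ outside the neighborhood, $\|x-\hat{x}\|_2>c$, calmness no longer applies, so I would fix any single $\hat{x}^{*}\in\partial f(\hat{x})\cap\mathcal{R}(A^{\top})$ and invoke the linear growth condition (Definition~\ref{def:2.4}): $\|x^{*}\|_2\le\rho_1\|x\|_2+\rho_2$, whence $\|x^{*}-\hat{x}^{*}\|_2\le\rho_1\|x-\hat{x}\|_2+\big(\rho_1\|\hat{x}\|_2+\rho_2+\|\hat{x}^{*}\|_2\big)$. Using $\|x-\hat{x}\|_2>c$ to absorb the additive constant into a multiple of $\|x-\hat{x}\|_2$ produces $\|x^{*}-\hat{x}^{*}\|_2\le C\|x-\hat{x}\|_2$, and Lemma~\ref{lem:Euclid-Bregman} again converts this into $\|x^{*}-\hat{x}^{*}\|_2^2\le\frac{2C^2}{\alpha}D_f^{x^{*}}(x,\hat{x})$, yielding a far-field constant $\theta_{\mathrm{far}}$. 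Setting $\theta(\hat{x})=\min\{\theta_{\mathrm{loc}},\theta_{\mathrm{far}}\}>0$ then gives \eqref{eq:global error bound} for all admissible $x$.

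The step I expect to be the main obstacle is the transition between the two regimes, and in particular the far-field estimate: calmness controls the subgradient discrepancy only near $\hat{x}$, so one must confirm that the much weaker linear growth still forces $\|x^{*}-\hat{x}^{*}\|_2$ to grow no faster than $\|x-\hat{x}\|_2$. What makes this work is that the companion Bregman distance $D_f^{\hat{x}^{*}}(\hat{x},x)\ge0$, together with the strong-convexity lower bound of Lemma~\ref{lem:Euclid-Bregman}, pins the \emph{quadratic} growth of $D_f^{x^{*}}(x,\hat{x})$ against the \emph{at most linear} growth of the subgradient difference; verifying that the resulting constants can be chosen uniformly over all $x$ and all $x^{*}\in\partial f(x)\cap\mathcal{R}(A^{\top})$, independently of the particular pair, is the delicate bookkeeping on which the argument hinges.
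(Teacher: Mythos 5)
The paper itself offers no proof of this theorem: it is imported verbatim from \cite{Schpfer2022ExtendedRK} (Theorem~3.9), so there is no in-paper argument to compare yours against. Judged on its own merits, your reconstruction is correct and, in fact, follows the same strategy as the original reference: the two-point identity reduces everything to bounding $\langle x^{*}-\hat{x}^{*},x-\hat{x}\rangle=\langle \xi, Ax-b\rangle$, where $x^{*}-\hat{x}^{*}=A^{\top}\xi$, and the minimum-norm preimage $\xi=(A^{\top})^{\dagger}(x^{*}-\hat{x}^{*})$ lies in $\mathcal{R}(A)$, which is exactly what justifies $\|\xi\|_{2}\le\sigma_{\min}(A)^{-1}\|x^{*}-\hat{x}^{*}\|_{2}$ with $\sigma_{\min}(A)$ the smallest \emph{nonzero} singular value (this restriction is worth stating, since $A$ may be rank-deficient). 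The two-regime control of $\|x^{*}-\hat{x}^{*}\|_{2}$ by $\sqrt{D_{f}^{x^{*}}(x,\hat{x})}$ — calmness plus linear regularity inside the calmness radius, linear growth plus the strong-convexity bound of Lemma~\ref{lem:Euclid-Bregman} outside it — is sound, and your constants $\theta_{\mathrm{loc}}$, $\theta_{\mathrm{far}}$ are uniform in $x$ and $x^{*}$ as required. Two small points you should make explicit rather than leave implicit: (i) the cancellation of one power of $\sqrt{D_{f}^{x^{*}}(x,\hat{x})}$ presupposes $D_{f}^{x^{*}}(x,\hat{x})>0$, so the degenerate case $D_{f}^{x^{*}}(x,\hat{x})=0$ should be dispatched first (the bound is trivial there); (ii) nonemptiness of $\partial f(\hat{x})\cap\mathcal{R}(A^{\top})$, on which both regimes rest, follows from the KKT conditions at $\hat{x}$, which hold without any constraint qualification because $f$ is real-valued (hence continuous) and the constraint is affine — one sentence, but it is the hinge of the whole argument.
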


\begin{remark}
The global error bound \eqref{eq:global error bound} provides a key link between the residual norm $\|Ax-b\|_2$ and the Bregman distance $D_{f}^{x^{*}}(x,\hat{x})$. In the context of the aRABEBK method, this implies that the adaptive relaxation parameters defined in \eqref{eq:adaptive_z}--\eqref{eq:adaptive_x}, which scale each update according to the local block residual, induce a corresponding reduction in the Bregman distance in the analysis.  
\end{remark}

We now specialize the above result to a commonly used objective function. When  $$f(x) = \lambda \|x\|_1 + \frac{1}{2}\|x\|^2_2,$$ it is immediate that $f$ is $1$-strongly convex and that $\nabla f^*(x^*) = S_{\lambda}(x^*)$, where $S_{\lambda}$ denotes the componentwise soft-thresholding operator. Moreover, as explicitly shown in \cite{schopfer2019linear}, the constant $1/\theta(\hat{x})$ in Theorem~\ref{thm:theta} admits the closed form
   \begin{equation} \label{eq:theta better form}
    \frac{1}{\theta(\hat{x})} = \frac { 1 } { \tilde { \sigma } _ { \min } ^ 2  ( A ) } \cdot \frac {  |\hat {  x } |_ {\min} + 2 \lambda } { | \hat { x } | _{\min} },
\end{equation}
where $| \hat { x } | _ { \min } = \min \left\{ | \hat { x } _ { j } | \,|\, j \in \mathrm{supp} ( \hat { x } ) \right\}$ and $\tilde { \sigma } _ { \min } ( A )= \min \left\{ \sigma _ { \min } ( A_{:,\mathcal{J}} ) |\, \, \mathcal{J}\subseteq [n] , A_{:,\mathcal{J}} \right.$ $\left. \neq 0 \right\}$.

Since Algorithm~\ref{alg:aRABEBK} alternates between updating the auxiliary variable $z$ and the
primary variable $x$, we begin by analyzing the convergence properties of the auxiliary sequence $\{z_k^*\}$ generated by aRABEBK. This intermediate analysis is crucial, as it provides the foundation for establishing the convergence behaviour of the primary iterates $\{x_k\}$.

\begin{theorem} \label{thm:convergence RABEBK_adaptive}
 Let \( g : \mathbb{R}^m \to \mathbb{R} \) be $\mu_g$-strongly convex with a Lipschitz-continuous gradient $L_g$. Suppose that the scaling parameter $\delta_z$ satisfies $0<\delta_z<2\mu_g$. Then the auxiliary iterates $\{z_k^*\}$ generated by the aRABEBK method converge in expectation to \[ \hat{z}: = b - \hat{y}, \qquad \text{where} \  \hat{y} \in \mathcal{R}(A), \] which is the unique solution of the constrained optimization problem \eqref{eq:BK01}. In particular, the iterates \( \{z_k^*\} \) satisfy the following estimate:
\begin{equation}\label{eq:convergence of zk_general_ada}
    \mathbb{E} \left[ \| z^*_{k+1} - (b - \hat{y}) \|_2^2 \right] \leq \frac{2L_g}{\mu_g}\left(1-\frac{\delta_z (2\mu_g-\delta_z)\theta(\hat{z})}{2\mu_g\beta_{\max}^{\mathcal{J}}\|A\|^2_F}\right)^{k+1}D_{h}^{\tilde{z}^{*}_0}(z_0, \hat{z}).
\end{equation}
\end{theorem}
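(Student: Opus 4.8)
The plan is to track the Bregman distance $D_k := D_h^{\tilde z_k^*}(z_k,\hat z)$ attached to the dual problem \eqref{eq:BK01}, where $h(z)=g(z)-b^\top z$, $\tilde z_k^* = z_k^*-b$, and $z_k=\nabla h^*(\tilde z_k^*)$, show that $D_k$ contracts linearly in expectation, and only at the end pass from $D_{k+1}$ to $\|z_{k+1}^*-\hat z\|_2^2$. First I would rewrite the auxiliary update in \eqref{eq:aRABEBK_scheme} as $z_{k+1}^* = z_k^* - \alpha_k^{(z)} d_k^{(z)}$ with the collapsed block direction $d_k^{(z)} = \|A_{:,\mathcal{J}_{j_k}}\|_F^{-2}A_{:,\mathcal{J}_{j_k}}(A_{:,\mathcal{J}_{j_k}})^\top z_k$, exactly as in the proof of Proposition~\ref{prop:exact_line_search_aRABEBK}, and with $\alpha_k^{(z)}$ given by \eqref{eq:adaptive_z}.

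Since $h$ inherits the $\mu_g$-strong convexity of $g$, its conjugate $h^*$ has a $1/\mu_g$-Lipschitz gradient, so the quadratic upper bound \eqref{eq:alpha-strongly convex} (with $f=h$, $\alpha=\mu_g$) evaluated at $\tilde z_k^*,\tilde z_{k+1}^*$, together with $\nabla h^*(\tilde z_k^*)=z_k$ and the definition \eqref{eq:Bregman distance}, yields
\[
D_{k+1}-D_k \le -\alpha_k^{(z)}\langle z_k-\hat z,\,d_k^{(z)}\rangle + \frac{(\alpha_k^{(z)})^2}{2\mu_g}\|d_k^{(z)}\|_2^2 .
\]
Here I would use the feasibility $A^\top\hat z=0$ of the solution of \eqref{eq:BK01}, which forces $(A_{:,\mathcal{J}_{j_k}})^\top\hat z=0$ and hence $\langle z_k-\hat z, d_k^{(z)}\rangle = \|r_k^{(z)}\|_2^2/\|A_{:,\mathcal{J}_{j_k}}\|_F^2$ with $r_k^{(z)}=(A_{:,\mathcal{J}_{j_k}})^\top z_k$, while $\|d_k^{(z)}\|_2^2 = \|A_{:,\mathcal{J}_{j_k}}r_k^{(z)}\|_2^2/\|A_{:,\mathcal{J}_{j_k}}\|_F^4$. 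Substituting the explicit $\alpha_k^{(z)}$ from \eqref{eq:adaptive_z} collapses the bound to
\[
D_{k+1}-D_k \le -\frac{\delta_z(2\mu_g-\delta_z)}{2\mu_g}\cdot\frac{\|r_k^{(z)}\|_2^4}{\|A_{:,\mathcal{J}_{j_k}}r_k^{(z)}\|_2^2},
\]
and the hypothesis $0<\delta_z<2\mu_g$ is precisely what keeps the leading coefficient positive.

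The remaining work is to turn this residual ratio into a contraction factor. I would bound $\|A_{:,\mathcal{J}_{j_k}}r_k^{(z)}\|_2^2 \le \sigma_{\max}^2(A_{:,\mathcal{J}_{j_k}})\|r_k^{(z)}\|_2^2$ so the decrement dominates $\|r_k^{(z)}\|_2^2/\sigma_{\max}^2(A_{:,\mathcal{J}_{j_k}})$, take the conditional expectation over the block probabilities $\|A_{:,\mathcal{J}_{j_k}}\|_F^2/\|A\|_F^2$, and combine the definition of $\beta_{\max}^{\mathcal{J}}$ in \eqref{eq: beta_def} with $\sum_j\|(A_{:,\mathcal{J}_j})^\top z_k\|_2^2=\|A^\top z_k\|_2^2$ (partition of $[n]$) to obtain a lower bound $\|A^\top z_k\|_2^2/(\beta_{\max}^{\mathcal{J}}\|A\|_F^2)$ on the expected decrement. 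Applying the global error bound of Theorem~\ref{thm:theta} to \eqref{eq:BK01}, whose constraint residual is exactly $A^\top z_k$, gives $\|A^\top z_k\|_2^2\ge\theta(\hat z)D_k$, hence the one-step contraction $\mathbb{E}_{j_k}[D_{k+1}]\le(1-c)D_k$ with $c=\delta_z(2\mu_g-\delta_z)\theta(\hat z)/(2\mu_g\beta_{\max}^{\mathcal{J}}\|A\|_F^2)$. Taking total expectation and iterating gives $\mathbb{E}[D_{k+1}]\le(1-c)^{k+1}D_0$, and \eqref{eq:convergence of zk_general_ada} then follows by passing from the Bregman distance to the squared Euclidean error via Lemma~\ref{lem:Euclid-Bregman} and the $L_g$-smoothness/$\mu_g$-strong convexity of $g$, which contributes the prefactor $2L_g/\mu_g$.

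I expect the main obstacle to be the middle step: controlling the \emph{adaptive} stepsize inside the Bregman descent. Unlike the constant-relaxation analysis of cRABEBK, here $\alpha_k^{(z)}$ carries the data-dependent ratio $\|r_k^{(z)}\|_2^2/\|A_{:,\mathcal{J}_{j_k}}r_k^{(z)}\|_2^2$, and one must verify both that the quadratic-in-$\alpha$ model still produces a clean negative multiple of $\|r_k^{(z)}\|_2^4/\|A_{:,\mathcal{J}_{j_k}}r_k^{(z)}\|_2^2$ and that the spectral inequality $\|A_{:,\mathcal{J}_{j_k}}r_k^{(z)}\|_2^2\le\sigma_{\max}^2\|r_k^{(z)}\|_2^2$ decouples the unknown residual direction so that the blockwise expectation telescopes to $\|A^\top z_k\|_2^2$. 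A secondary delicate point is the final conversion, which must respect the distinction between the dual iterate $z_k^*$ and the primal solution and produce the correct $2L_g/\mu_g$ factor; everything else (the global error bound and the telescoping of the linear recursion) is routine once these two reductions are in place.
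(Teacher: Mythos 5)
Your proposal is correct and follows essentially the same route as the paper's own proof: the same Bregman-distance recursion for $D_h^{\tilde z_k^*}(z_k,\hat z)$ obtained from the quadratic bound \eqref{eq:alpha-strongly convex} with $(A_{:,\mathcal{J}_{j_k}})^\top\hat z=0$, the same cancellation upon substituting the adaptive stepsize \eqref{eq:adaptive_z}, the same spectral bound and blockwise expectation yielding $\|A^\top z_k\|_2^2/(\beta_{\max}^{\mathcal{J}}\|A\|_F^2)$, the same appeal to Theorem~\ref{thm:theta}, and the same final conversion via Lemma~\ref{lem:Euclid-Bregman} and the Lipschitz gradient of $g$ producing the factor $2L_g/\mu_g$. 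There is no gap to report.
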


\begin{proof}
Computing the approximate solutions $\{z_k^*\}$ amounts to applying the randomized averaging block extended Bregman-Kaczmarz method with the adaptive relaxation parameter $\alpha_k^{(z)}$ to the constrained optimization problem \eqref{eq:BK01}. Therefore, similar to the argument in subsection \ref{sec:RABEBK-d}, we can conclude that the update scheme in aRABEBK for solving $z^*_k $ is equivalent to the scheme
\begin{equation} \label{eq:th421}
    \begin{split}
        &\tilde{z}^*_{k+1} = \tilde{z}^*_k - \frac{\alpha_k^{(z)}}{\|A_{:,\mathcal{J}_{j_k}}\|^2_F} A_{:,\mathcal{J}_{j_k}}(A_{:,\mathcal{J}_{j_k}})^\top z_k, \\
        &z_{k+1} = \nabla h^*(\tilde{z}^*_{k+1}),
    \end{split}
 \end{equation}
with  initialization \( \tilde{z}^*_0 = 0 \). 

Since $g$ is $\mu_g$-strongly convex,  it follows that $h(z)=g(z)-b^\top z$ is also $\mu_g$-strongly convex, and $\nabla h^*$ is $1/\mu_g$-Lipschitz continuous. By the definition of Bregman distance \eqref{eq:Bregman distance} and the standard inequality for strongly convex functions \eqref{eq:alpha-strongly convex}, we have
\begin{equation*}
\begin{split}
    D_{h}^{\tilde{z}^*_{k+1}}(z_{k+1},\hat{z})=& h^{*}(\tilde{z}^*_{k+1})- \langle \tilde{z}^*_{k+1},\hat{z} \rangle +h(\hat{z}) \\
    \leq &h^{*}(\tilde{z}^*_{k}) +\langle \nabla h^{*}(\tilde{z}^*_{k}),\tilde{z}^*_{k+1}-\tilde{z}_k^*\rangle \\
    &+ \frac{1}{2\mu_g}\|\tilde{z}^*_{k+1}-\tilde{z}_k^*  \|^2_2 +h(\hat{z})-\langle \tilde{z}^*_{k+1},\hat{z}\rangle,
\end{split}
\end{equation*}
where $\hat{z}$ is the unique solution of \eqref{eq:BK01}.

Using the fact that $z_k = \nabla h^{*}(\tilde{z}^*_k)$, it is concluded that
\begin{equation*}
    \begin{split}
        &D_{h}^{\tilde{z}_{k+1}^*}(z_{k+1},\hat{z}) \leq D_{h}^{\tilde{z}_k^*}(z_k,\hat{z}) + \langle z_k, \tilde{z}_{k+1}^* - \tilde{z}_k^* \rangle + \frac{1}{2\mu_g}\|\tilde{z}_{k+1}^* - \tilde{z}_k^*\|_2^2 - \langle \tilde{z}_{k+1}^* - \tilde{z}_k^*, \hat{z} \rangle \\
        &= D_{h}^{\tilde{z}_k^*}(z_k,\hat{z}) + \langle \tilde{z}_{k+1}^* - \tilde{z}_k^*, z_k - \hat{z} \rangle + \frac{1}{2\mu_g}\|\tilde{z}_{k+1}^* - \tilde{z}_k^*\|_2^2 \\
        &= D_{h}^{\tilde{z}_k^*}(z_k,\hat{z}) - \langle \frac{\alpha_k^{(z)}}{\|A_{:,\mathcal{J}_{j_k}}\|_F^2} A_{:,\mathcal{J}_{j_k}} (A_{:,\mathcal{J}_{j_k}})^\top z_k, z_k - \hat{z}\rangle + \frac{1}{2\mu_g}\|\tilde{z}_{k+1}^* - \tilde{z}_k^*\|_2^2 \\
        &= D_{h}^{\tilde{z}_k^*}(z_k,\hat{z}) - \langle \frac{\alpha_k^{(z)}(A_{:,\mathcal{J}_{j_k}})^\top z_k}{\|A_{:,\mathcal{J}_{j_k}}\|_F^2}, (A_{:,\mathcal{J}_{j_k}})^\top z_k - (A_{:,\mathcal{J}_{j_k}})^{\top}\hat{z}\rangle + \frac{1}{2\mu_g}\|\tilde{z}_{k+1}^* - \tilde{z}_k^*\|_2^2.
    \end{split}
\end{equation*}
Recall that $(A_{:,\mathcal{J}_{j_k}})^{\top}\hat{z}=0$, which yields
\begin{align*}
D_{h}^{\tilde{z}_{k+1}^*}(z_{k+1},\hat{z}) \leq& D_{h}^{\tilde{z}_k^*}(z_k,\hat{z}) - \frac{\alpha_k^{(z)}}{\|A_{:,\mathcal{J}_{j_k}}\|_F^2} \|(A_{:,\mathcal{J}_{j_k}})^\top z_k\|_2^2 \\
&+ \frac{1}{2\mu_g}\frac{(\alpha_k^{(z)})^2}{\|(A_{:,\mathcal{J}_{j_k}})^\top z_k\|_2^4} \|A_{:,\mathcal{J}_{j_k}} (A_{:,\mathcal{J}_{j_k}})^\top z_k\|_2^2.
\end{align*}
Then, by using the definition  \eqref{eq:adaptive_z} of  $\alpha_k^{(z)}$, we obtain
\begin{align}
  D_{h}^{\tilde{z}_{k+1}^*}(z_{k+1},\hat{z}) 
    & \leq D_{h}^{\tilde{z}_k^*}(z_k,\hat{z}) -\delta_z \frac{\|(A_{:,\mathcal{J}_{j_k}})^\top z_k\|_2^4}{\|A_{:,\mathcal{J}_{j_k}} (A_{:,\mathcal{J}_{j_k}})^\top z_k\|_2^2}+\frac{1}{2\mu_g}\delta_z^2 \frac{\|(A_{:,\mathcal{J}_{j_k}})^\top z_k\|_2^4}{\|A_{:,\mathcal{J}_{j_k}} (A_{:,\mathcal{J}_{j_k}})^\top z_k\|_2^2}\notag \\
    &= D_{h}^{\tilde{z}_k^*}(z_k,\hat{z}) -\frac{\delta_z (2\mu_g-\delta_z)}{2\mu_g} \frac{\|(A_{:,\mathcal{J}_{j_k}})^\top z_k\|_2^4}{\|A_{:,\mathcal{J}_{j_k}} (A_{:,\mathcal{J}_{j_k}})^\top z_k\|_2^2}\notag\\
    &\leq D_{h}^{\tilde{z}_k^*}(z_k,\hat{z})-\frac{\delta_z (2\mu_g-\delta_z)}{2\mu_g}\frac{1}{\sigma_{\max}^2 (A_{:,\mathcal{J}_{j_k}})}\|(A_{:,\mathcal{J}_{j_k}})^\top z_k\|_2^2.\label{eq:conver_z_const6}
\end{align}
By taking the expectation conditional on the first $k$ iterations on both sides of \eqref{eq:conver_z_const6}
and using the linearity of the expectation, we have
\begin{align}
  \mathbb{E}\left[ D_{h}^{\tilde{z}^{*}_{k+1}}(z_{k+1}, \hat{z}) \right] &\leq \mathbb{E}\left[ D_{h}^{\tilde{z}^{*}_k}(z_k, \hat{z})  \right]- \frac{\delta_z (2\mu_g-\delta_z)}{2\mu_g\|A\|^2_F}\sum_{j_k=1}^{t}\frac{\|A_{:,\mathcal{J}_{j_k}}\|_F^2}{\sigma_{\max}^2 (A_{:,\mathcal{J}_{j_k}})}\|(A_{:,\mathcal{J}_{j_k}})^\top z_k\|_2^2\notag \\
        &\leq \mathbb{E}\left[ D_{h}^{\tilde{z}^{*}_k}(z_k, \hat{z})  \right]- \frac{\delta_z (2\mu_g-\delta_z)}{2\mu_g\|A\|^2_F}\sum_{j_k=1}^{t}\frac{1}{\beta_{\max}^{\mathcal{J}}}\|(A_{:,\mathcal{J}_{j_k}})^\top z_k\|_2^2 \notag \\
        &= \mathbb{E}\left[ D_{h}^{\tilde{z}^{*}_k}(z_k, \hat{z})  \right]- \frac{\delta_z (2\mu_g-\delta_z)}{2\mu_g\|A\|^2_F}\frac{1}{\beta_{\max}^{\mathcal{J}}}\|A^\top z_k\|_2^2.\label{eq:conver_z_const7}
\end{align}
It follows from the inequality \eqref{eq:global error bound} in Theorem \ref{thm:theta} that $$\|A^\top z_k\|_2^2 \geq \theta(\hat{z}) D_{h}^{z_{k}^{*}}(z_{k},\hat{z}).$$ 
Taking the expectation on both sides of \eqref{eq:conver_z_const7} then gives
\begin{equation*}
    \mathbb{E}\left[ D_{h}^{\tilde{z}^{*}_{k+1}}(z_{k+1}, \hat{z}) \right] \leq \left(1-\frac{\delta_z (2\mu_g-\delta_z)\theta(\hat{z})}{2\mu_g\beta_{\max}^{\mathcal{J}}\|A\|^2_F}\right)\mathbb{E}\left[ D_{h}^{\tilde{z}^{*}_k}(z_k, \hat{z})  \right].
\end{equation*}
Lemma \ref{lem:Euclid-Bregman}  says that the Euclidean distance is bounded above by the Bregman distance, thus we have
\begin{equation}
    \mathbb{E} \left[\|z_{k+1}-\hat{z}\|_2^2\right]\leq \frac{2}{\mu_g}\left(1-\frac{\delta_z (2\mu_g-\delta_z)\theta(\hat{z})}{2\mu_g\beta_{\max}^{\mathcal{J}}\|A\|^2_F}\right)^{k+1}D_{h}^{\tilde{z}^{*}_0}(z_0, \hat{z}).
\end{equation}

We now arrive at the desired conclusion:
\begin{equation*} 
    \begin{split}
         \mathbb{E} \left[\|z^*_{k+1}-(b-\hat{y})\|_2^2\right] &=  \mathbb{E} \left[\|\nabla g(z_{k+1})-\nabla g(\hat{z})\|_2^2\right] \\
          &\leq L_g  \mathbb{E} \left[\|z_{k+1}-\hat{z}\|_2^2\right] \\
         &\leq \frac{2L_g}{\mu_g}\left(1-\frac{\delta_z (2\mu_g-\delta_z)\theta(\hat{z})}{2\mu_g\beta_{\max}^{\mathcal{J}}\|A\|^2_F}\right)^{k+1}D_{h}^{\tilde{z}^{*}_0}(z_0, \hat{z}),
    \end{split}
\end{equation*}
where the first inequality exploits the fact that  $g$ has a Lipschitz-continuous gradient with constant $L_g$.
\end{proof}
\begin{remark} \label{remark:zk_general}
    From the estimate in~\eqref{eq:convergence of zk_general_ada}, we observe that
the relaxation parameter $\delta_z$ directly governs the linear convergence rate of the auxiliary sequence $\{z_k^*\}$. For any choice $0 < \delta_z < 2\mu_g$, the iterates $\{z_k^*\}$ converge in expectation to the dual solution $\hat{z} = b - \hat{y}$ with a linear contraction factor
$$1 - \frac{\delta_z(2\mu_g - \delta_z)\,\theta(\hat{z})}{2\mu_g \beta_{\max}^{\mathcal{J}}\|A\|_F^2}.$$
 This factor is minimized when $\delta_z = \mu_g$. In that case the bound simplifies to
    \begin{equation*}
          \mathbb{E} \left[\|z^*_{k+1}-(b-\hat{y})\|_2^2 \right] 
          \leq \frac{2L_g}{\mu_g}
          \left( 1 - \frac{\mu_g \theta(\hat{z}) }{2\beta_{\max}^{\mathcal{J}}\| A \|_F^2} \right)^{k+1}
          D_{h}^{\tilde{z}^{*}_0}(z_0, \hat{z}).
    \end{equation*}
\end{remark}

With the linear convergence of the auxiliary sequence $\{z_k^*\}$ established,
we proceed to analyze the convergence behavior of the primary iterates
$\{x_k\}$.
\begin{theorem}
    Let \( f : \mathbb{R}^n \to \mathbb{R} \) be $\mu_f$-strongly convex and assume the conditions in Theorem \ref{thm:theta} hold. Then, the iterates $\{x_k\}$ generated by the aRABEBK method with adaptive relaxation parameter \eqref{eq:adaptive_x} converge in expectation to the unique solution
$\hat{x}$ of the constrained optimization problem \eqref{eq:BK02}. 

More precisely, for any $\zeta,\varepsilon > 0 $ and any $0<\delta_x<2\mu_f$, there exist positive constants $c_1$ and $c_2$ such that
\begin{align} \label{eq:thm_x_ada_convergence_general}
    \mathbb{E} \left[D_{f}^{x_{k+1}^{*}}(x_{k+1},\hat{x})\right] \leq &\left(1-c_1\cdot\gamma(\hat{x})\right)\mathbb{E}\left[D_{f}^{x_{k}^{*}}(x_{k},\hat{x})\right]\notag\\
    &+c_2\cdot \frac{2L_g}{\mu_g}\left(1-\frac{\delta_z (2\mu_g-\delta_z)\theta(\hat{z})}{2\mu_g\beta_{\max}^{\mathcal{J}}\|A\|^2_F}\right)^{k+1}D_{h}^{\tilde{z}^{*}_0}(z_0, \hat{z}),
\end{align}
where $$c_1=\frac{\delta_x(2\mu_f(1-\zeta)-\delta_x) (1-\frac{1}{\varepsilon})}{2\mu_f\beta_{\max}^\mathcal{I}\|A\|_F^2},
\quad c_2 = \frac{\delta_x}{4\zeta \beta_{\min}^\mathcal{I}\|A\|_F^2}-\frac{\delta_x(2\mu_f(1-\zeta)-\delta_x) (1-\varepsilon)}{2\mu_f\beta_{\max}^\mathcal{I}\|A\|_F^2},$$ 
with $\beta^{\mathcal{I}}_{\min} := \min_{i \in [s]} \frac{\sigma^2_{\min} \left( A_{\mathcal{I}_{i},:} \right)}{\| A_{\mathcal{I}_{i},:} \|^2_F}$.
\end{theorem}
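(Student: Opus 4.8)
The plan is to mirror the argument used for the auxiliary sequence in Theorem~\ref{thm:convergence RABEBK_adaptive}, now applied to the primal update of \eqref{eq:aRABEBK_scheme}, and to carry along the already-controlled auxiliary error as an additive perturbation. Since $f$ is $\mu_f$-strongly convex, $\nabla f^*$ is $1/\mu_f$-Lipschitz and \eqref{eq:alpha-strongly convex} holds for $f^*$. Starting from $D_f^{x_{k+1}^*}(x_{k+1},\hat x)=f^*(x_{k+1}^*)-\langle x_{k+1}^*,\hat x\rangle+f(\hat x)$ and using $x_k=\nabla f^*(x_k^*)$, I would first derive the one-step inequality
\[
D_f^{x_{k+1}^*}(x_{k+1},\hat x)\le D_f^{x_k^*}(x_k,\hat x)+\langle x_{k+1}^*-x_k^*,x_k-\hat x\rangle+\tfrac{1}{2\mu_f}\|x_{k+1}^*-x_k^*\|_2^2,
\]
exactly as in the $z$-analysis. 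Writing the block-averaged direction as $x_{k+1}^*-x_k^*=\alpha_k^{(x)}(A_{\mathcal I_{i_k},:})^\top r_k^{(x)}/\|A_{\mathcal I_{i_k},:}\|_F^2$ and substituting the adaptive stepsize \eqref{eq:adaptive_x}, the diagonal contributions collapse to $-\tfrac{\delta_x(2\mu_f-\delta_x)}{2\mu_f}\,\|r_k^{(x)}\|_2^4/\|(A_{\mathcal I_{i_k},:})^\top r_k^{(x)}\|_2^2$, paralleling \eqref{eq:conver_z_const6}.

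The new ingredient is the coupling with the auxiliary variable. Using $A\hat x=\hat y=b-\hat z$ blockwise, one has the key identity $A_{\mathcal I_{i_k},:}(x_k-\hat x)=-r_k^{(x)}-(z^*_{k+1,\mathcal I_{i_k}}-\hat z_{\mathcal I_{i_k}})$, which turns the linear term into the above diagonal term plus a cross term $-\delta_x\|r_k^{(x)}\|_2^2\langle r_k^{(x)},z^*_{k+1,\mathcal I_{i_k}}-\hat z_{\mathcal I_{i_k}}\rangle/\|(A_{\mathcal I_{i_k},:})^\top r_k^{(x)}\|_2^2$. I would control this cross term with a first Young inequality $-\langle r_k^{(x)},v\rangle\le\zeta\|r_k^{(x)}\|_2^2+\tfrac{1}{4\zeta}\|v\|_2^2$, where $v:=z^*_{k+1,\mathcal I_{i_k}}-\hat z_{\mathcal I_{i_k}}$: its $\|r_k^{(x)}\|_2^2$-part merges with the diagonal term to produce the factor $2\mu_f(1-\zeta)-\delta_x$, while its $\|v\|_2^2$-part is left as a penalty. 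I would then bound the residual fractions by singular-value inequalities, namely $\|r_k^{(x)}\|_2^4/\|(A_{\mathcal I_{i_k},:})^\top r_k^{(x)}\|_2^2\ge\|r_k^{(x)}\|_2^2/\sigma_{\max}^2(A_{\mathcal I_{i_k},:})$ for the descent term and $\|r_k^{(x)}\|_2^2/\|(A_{\mathcal I_{i_k},:})^\top r_k^{(x)}\|_2^2\le 1/\sigma_{\min}^2(A_{\mathcal I_{i_k},:})$ for the penalty, and finally replace $\sigma_{\max}^2,\sigma_{\min}^2$ by $\beta^{\mathcal I}_{\max}\|A_{\mathcal I_{i_k},:}\|_F^2$ and $\beta^{\mathcal I}_{\min}\|A_{\mathcal I_{i_k},:}\|_F^2$ via \eqref{eq: beta_def}.

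To expose the $x$-Bregman distance inside the descent term I would apply a second Young inequality to the same identity in the form $\|r_k^{(x)}\|_2^2=\|A_{\mathcal I_{i_k},:}(x_k-\hat x)+v\|_2^2\ge(1-\tfrac1\varepsilon)\|A_{\mathcal I_{i_k},:}(x_k-\hat x)\|_2^2+(1-\varepsilon)\|v\|_2^2$, which (for $\varepsilon>1$) separates a genuine $x$-descent part from a further $\|v\|_2^2$-penalty. Taking the conditional expectation over $i_k$ with the sampling probabilities $\|A_{\mathcal I_{i_k},:}\|_F^2/\|A\|_F^2$, the factor $\|A_{\mathcal I_{i_k},:}\|_F^{-2}$ cancels and the blockwise sums telescope, giving $\sum_i\|A_{\mathcal I_i,:}(x_k-\hat x)\|_2^2=\|A(x_k-\hat x)\|_2^2$ and $\sum_i\|z^*_{k+1,\mathcal I_i}-\hat z_{\mathcal I_i}\|_2^2=\|z_{k+1}^*-\hat z\|_2^2$. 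Invoking the global error bound \eqref{eq:global error bound} for the constrained problem \eqref{eq:BK02}, i.e. $\|A(x_k-\hat x)\|_2^2=\|Ax_k-\hat y\|_2^2\ge\gamma(\hat x)\,D_f^{x_k^*}(x_k,\hat x)$, converts the descent part into $-c_1\gamma(\hat x)\,D_f^{x_k^*}(x_k,\hat x)$, while the two accumulated $\|v\|_2^2$-penalties (one from each Young step, with the $\beta^{\mathcal I}_{\min}$ and $\beta^{\mathcal I}_{\max}$ normalisations) assemble into $c_2\|z_{k+1}^*-\hat z\|_2^2$ with exactly the stated $c_1,c_2$. Substituting the bound \eqref{eq:convergence of zk_general_ada} from Theorem~\ref{thm:convergence RABEBK_adaptive} for $\mathbb E\|z_{k+1}^*-\hat z\|_2^2$ and taking total expectation yields \eqref{eq:thm_x_ada_convergence_general}; the stated convergence $x_k\to\hat x$ then follows by unrolling this recursion (the $z$-term being geometrically small) together with Lemma~\ref{lem:Euclid-Bregman}.

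The hard part will be the simultaneous bookkeeping of the two free parameters $\zeta$ and $\varepsilon$: the cross term must be split so that its $\|r_k^{(x)}\|_2^2$ share cancels precisely against the diagonal step-size gain (forcing the $2\mu_f(1-\zeta)-\delta_x$ factor and the implicit sign restriction $2\mu_f(1-\zeta)>\delta_x$), while the $\|v\|_2^2$ shares from \emph{both} Young steps must combine into the single coefficient $c_2$ only after the block-probability expectation has been taken and the Frobenius-norm factors have cancelled. A secondary technical point is that the $\sigma_{\min}$ bound on the penalty fraction is valid only when $r_k^{(x)}\in\mathcal R(A_{\mathcal I_{i_k},:})$ (equivalently when the selected row block has full row rank), which must be invoked as a standing assumption on the blocks, whereas the $\sigma_{\max}$ bound used for the descent term needs no such restriction. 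Finally, positivity of $c_1$, and hence a genuine contraction factor $1-c_1\gamma(\hat x)<1$, requires $\varepsilon>1$ and $\zeta$ small enough that $2\mu_f(1-\zeta)>\delta_x$, which is the admissible regime implicit in the statement.
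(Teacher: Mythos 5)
Your proposal is correct and follows essentially the same route as the paper's own proof: the same one-step Bregman inequality from the $1/\mu_f$-Lipschitz gradient of $f^*$, the same coupling identity between the block residual and the auxiliary error, the same two Young inequalities (with $\zeta$ for the cross term and $\varepsilon$ to split off the $x$-descent part), the same $\sigma_{\max}/\sigma_{\min}$ and $\beta^{\mathcal{I}}_{\max}/\beta^{\mathcal{I}}_{\min}$ bounds, block-probability expectation and telescoping, the global error bound \eqref{eq:global error bound}, and finally substitution of the auxiliary estimate \eqref{eq:convergence of zk_general_ada}. One small correction to your side remark: the range condition $r_k^{(x)}\in\mathcal{R}(A_{\mathcal{I}_{i_k},:})$ needed for the $\sigma_{\min}$ bound is not an extra standing assumption on the blocks (and is not equivalent to full row rank) — it holds automatically, since $z_0^*=b$ and every $z^*$-update lies in $\mathcal{R}(A)$, so $z^*_{k+1}-b\in\mathcal{R}(A)$ and its restriction to the rows $\mathcal{I}_{i_k}$ lies in $\mathcal{R}(A_{\mathcal{I}_{i_k},:})$, which is exactly the fact the paper invokes.
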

\begin{proof}
 We now study the convergence behaviour of the primary iterates $\{x_k\}$. Let $v_k := A_{\mathcal{I}_{i_k},:} x_k - b_{\mathcal{I}_{i_k}} + z^{*}_{k+1,\mathcal{I}_{i_k}}
$.
Since $\nabla f^*$ is $1/\mu_f$-Lipschitz continuous, it follows from the definition of the Bregman distance~\eqref{eq:Bregman distance} and inequality~\eqref{eq:alpha-strongly convex} that
\begin{align}
   & D_{f}^{x_{k+1}^{*}}(x_{k+1},\hat{x})=f^{*}(x_{k+1}^{*})- \langle x_{k+1}^{*},\hat{x} \rangle +f(\hat{x}) \notag \\
    &=f^*\left(x_k^*-\frac{\alpha_k^{(x)}}{\|A_{\mathcal{I}_{i_k},:}\|^2_F}(A_{\mathcal{I}_{i_k},:})^\top v_k\right)-\langle x_k^*,\hat{x}\rangle+\left\langle \frac{\alpha_k^{(x)}}{\|A_{\mathcal{I}_{i_k},:}\|^2_F}(A_{\mathcal{I}_{i_k},:})^\top v_k, \hat{x}\right\rangle +f(\hat{x})\notag\\
    &\leq f^{*}(x_{k}^{*}) +\langle \nabla f^{*}(x^*_k),-\frac{\alpha_k^{(x)}}{\|A_{\mathcal{I}_{i_k},:}\|^2_F}(A_{\mathcal{I}_{i_k},:})^\top v_k\rangle + \frac{1}{2\mu_f}\left\|-\frac{\alpha_k^{(x)}}{\|A_{\mathcal{I}_{i_k},:}\|^2_F}(A_{\mathcal{I}_{i_k},:})^\top v_k  \right\|^2_2 \notag\\ 
    &-\langle x^*_{k},\hat{x}\rangle+f(\hat{x})+\left\langle \frac{\alpha_k^{(x)}}{\|A_{\mathcal{I}_{i_k},:}\|^2_F} v_k, A_{\mathcal{I}_{i_k},:}\hat{x}\right\rangle.\label{eq:converge_x_const2}
\end{align}

Using the identities $x_k = \nabla f^{*}(x^*_k)$ and $A_{\mathcal{I}_{i_k}} \hat{x} = \hat{y}_{\mathcal{I}_{i_k}}$, together with the definition \eqref{eq: beta_def} of $\beta^{\mathcal{I}}_{\max}$,
 the inequality~\eqref{eq:converge_x_const2} can be rewritten as
 \begin{align}\label{eq:converge_x_const8}
 D_{f}^{x_{k+1}^{*}}(x_{k+1},\hat{x}) \leq &  D_{f}^{x_{k}^{*}}(x_{k},\hat{x}) - \frac{\alpha_k^{(x)}}{\|A_{\mathcal{I}_{i_k},:}\|^2_F}\left\langle  v_k, A_{\mathcal{I}_{i_k},:}x_k - \hat{y}_{\mathcal{I}_{i_k}}\right\rangle \notag\\
 &+ \frac{1}{2\mu_f}\frac{(\alpha_k^{(x)})^2}{\|A_{\mathcal{I}_{i_k},:}\|^4_F}\|(A_{\mathcal{I}_{i_k},:})^\top v_k\|^2_2. 
\end{align}

Note that for any $\zeta>0$, we can decompose
\begin{equation*}
    \begin{split}
        \left\langle  v_k, A_{\mathcal{I}_{i_k},:}x_k - \hat{y}_{\mathcal{I}_{i_k}}\right\rangle &= \left\langle  v_k, A_{\mathcal{I}_{i_k},:}x_k -  b_{\mathcal{I}_{i_k}} + z^{*}_{k+1,\mathcal{I}_{i_k}}+ b_{\mathcal{I}_{i_k}} -z^{*}_{k+1,\mathcal{I}_{i_k}}-\hat{y}_{\mathcal{I}_{i_k}}\right\rangle \\
        &= \|v_k\|_2^2+\left\langle  v_k, b_{\mathcal{I}_{i_k}} -z^{*}_{k+1,\mathcal{I}_{i_k}}-\hat{y}_{\mathcal{I}_{i_k}}\right\rangle. 
       \end{split}
\end{equation*}
Applying Young’s inequality yields
$$
 \left\langle  v_k, A_{\mathcal{I}_{i_k},:}x_k - \hat{y}_{\mathcal{I}_{i_k}}\right\rangle  \geq \|v_k\|_2^2 - \zeta \|v_k\|_2^2 - \frac{1}{4\zeta}\|b_{\mathcal{I}_{i_k}} -z^{*}_{k+1,\mathcal{I}_{i_k}}-\hat{y}_{\mathcal{I}_{i_k}}\|_2^2.
$$
Therefore, it holds that 
\begin{align} \label{eq:extra1}
    - \frac{\alpha_k^{(x)}}{\|A_{\mathcal{I}_{i_k},:}\|^2_F}\left\langle  v_k, A_{\mathcal{I}_{i_k},:}x_k - \hat{y}_{\mathcal{I}_{i_k}}\right\rangle \leq& - \frac{\alpha_k^{(x)}}{\|A_{\mathcal{I}_{i_k},:}\|^2_F}(1-\zeta)\|v_k\|_2^2 \notag\\
    &+\frac{\alpha_k^{(x)}}{4\|A_{\mathcal{I}_{i_k},:}\|^2_F}\frac{1}{\zeta}\|b_{\mathcal{I}_{i_k}} -z^{*}_{k+1,\mathcal{I}_{i_k}}-\hat{y}_{\mathcal{I}_{i_k}}\|_2^2.
\end{align}
Using the definition \eqref{eq:adaptive_x} of adaptive stepsize $\alpha_k^{(x)}$, we obtain
\begin{equation*}
\begin{split}
    \frac{\alpha_k^{(x)}}{4\|A_{\mathcal{I}_{i_k},:}\|^2_F}\frac{1}{\zeta}\|b_{\mathcal{I}_{i_k}} -z^{*}_{k+1,\mathcal{I}_{i_k}}-\hat{y}_{\mathcal{I}_{i_k}}\|_2^2 &= \frac{\delta_x}{4\zeta}\frac{\|v_k\|^2_2}{\|(A_{\mathcal{I}_{i_k},:})^\top v_k\|^2_2}\|b_{\mathcal{I}_{i_k}} -z^{*}_{k+1,\mathcal{I}_{i_k}}-\hat{y}_{\mathcal{I}_{i_k}}\|_2^2 \\
    &\leq  \frac{\delta_x}{4\zeta \sigma_{\min}^2(A_{\mathcal{I}_{i_k},:})}\|b_{\mathcal{I}_{i_k}} -z^{*}_{k+1,\mathcal{I}_{i_k}}-\hat{y}_{\mathcal{I}_{i_k}}\|_2^2.
\end{split}  
\end{equation*}
Here we used the inequality $\|(A^\top y)\|_2 \ge \sigma_{\min}(A)\|y\|_2$ for all $y\in\mathcal{R}(A)$,
together with the fact that $v_k\in\mathcal{R}(A_{\mathcal{I}_{i_k},:})$.

Thus, the inequality \eqref{eq:converge_x_const8} now can be rewritten in the form
\begin{align}
        D_{f}^{x_{k+1}^{*}}(x_{k+1},\hat{x}) &\leq  D_{f}^{x_{k}^{*}}(x_{k},\hat{x}) - \frac{\alpha_k^{(x)}(1-\zeta)}{\|A_{\mathcal{I}_{i_k},:}\|_F^2}\|v_k\|_2^2+\frac{1}{2\mu_f}\frac{(\alpha_k^{(x)})^2}{\|A_{\mathcal{I}_{i_k},:}\|_F^4}\|(A_{\mathcal{I}_{i_k},:})^\top v_k\|^2_2
        \notag\\&+\frac{\delta_x}{4\zeta \sigma_{\min}^2(A_{\mathcal{I}_{i_k},:})}\|b_{\mathcal{I}_{i_k}} -z^{*}_{k+1,\mathcal{I}_{i_k}}-\hat{y}_{\mathcal{I}_{i_k}}\|_2^2 \notag\\
        &=D_{f}^{x_{k}^{*}}(x_{k},\hat{x}) - \delta_x \frac{(1-\zeta)\|v_k\|_2^4}{\|(A_{\mathcal{I}_{i_k},:})^\top v_k\|^2_2}+\frac{1}{2\mu_f}\delta^2_x \frac{\|v_k\|_2^4}{\|(A_{\mathcal{I}_{i_k},:})^\top v_k\|^2_2}\notag\\&+\frac{\delta_x}{4\zeta \sigma_{\min}^2(A_{\mathcal{I}_{i_k},:})}\|b_{\mathcal{I}_{i_k}} -z^{*}_{k+1,\mathcal{I}_{i_k}}-\hat{y}_{\mathcal{I}_{i_k}}\|_2^2 \notag\\
        &=D_{f}^{x_{k}^{*}}(x_{k},\hat{x}) - \frac{\delta_x(2\mu_f(1-\zeta)-\delta_x)}{2\mu_f} \frac{\|v_k\|_2^4}{\|(A_{\mathcal{I}_{i_k},:})^\top v_k\|^2_2} \notag\\ &+\frac{\delta_x}{4\zeta \sigma_{\min}^2(A_{\mathcal{I}_{i_k},:})}\|b_{\mathcal{I}_{i_k}} -z^{*}_{k+1,\mathcal{I}_{i_k}}-\hat{y}_{\mathcal{I}_{i_k}}\|_2^2. \notag\\
        &\leq  D_{f}^{x_{k}^{*}}(x_{k},\hat{x}) - \frac{\delta_x(2\mu_f(1-\zeta)-\delta_x)}{2\mu_f}\frac{1}{\sigma_{\max}^2 (A_{\mathcal{I}_{i_k},:})}\|v_k\|_2^2 \notag\\
        &+\frac{\delta_x}{\zeta \sigma_{\min}^2(A_{\mathcal{I}_{i_k},:})}\|b_{\mathcal{I}_{i_k}} -z^{*}_{k+1,\mathcal{I}_{i_k}}-\hat{y}_{\mathcal{I}_{i_k}}\|_2^2. \label{eq:xxxxx}
\end{align}

Inequality \eqref{eq:xxxxx} provides a preliminary descent estimate for the Bregman distance associated with the primary iterate $x_{k+1}$. To further refine this bound, we decompose $\|v_k\|_2^2$ as
\begin{align} \label{eq:convergence_vk}
    \|v_k\|^2_2 =& \|A_{\mathcal{I}_{i_k},:}x_k - \hat{y}_{\mathcal{I}_{i_k}}\|^2_2 + 2 \left\langle \hat{y}_{\mathcal{I}_{i_k}} - b_{\mathcal{I}_{i_k}} + z^{*}_{k+1,\mathcal{I}_{i_k}}, A_{\mathcal{I}_{i_k},:}x_k - \hat{y}_{\mathcal{I}_{i_k}}\right\rangle \notag\\
    &+ \|z^*_{k+1,\mathcal{I}_{{i_k}}}-(b_{\mathcal{I}_{i_k}}-\hat{y}_{\mathcal{I}_{i_k}})\|^2_2.
\end{align}
Moreover, for any $\varepsilon>0$, Young’s inequality implies
\begin{equation} \label{eq:Du2020_2.5}
\begin{split}
    2\left\langle \hat{y}_{\mathcal{I}_{i_k}} - b_{\mathcal{I}_{i_k}} + z^{*}_{k+1,\mathcal{I}_{i_k}}, A_{\mathcal{I}_{i_k},:}x_k - \hat{y}_{\mathcal{I}_{i_k}}\right\rangle \geq &-\varepsilon \|\hat{y}_{\mathcal{I}_{i_k}} - b_{\mathcal{I}_{i_k}} + z^*_{k+1,\mathcal{I}_{{i_k}}}\|^2_2 \\
    &-\frac{1}{\varepsilon}\|A_{\mathcal{I}_{i_k},:}x_k - \hat{y}_{\mathcal{I}_{i_k}}\|_2^2.
\end{split}
\end{equation}
This decomposition allows us to separate the contribution of the primary residual  $A_{\mathcal{I}_{i_k},:}x_k - \hat{y}_{\mathcal{I}_{i_k}}$ from the error associated with the auxiliary iterate $z^*_{k+1,\mathcal{I}_{i_k}}-(b_{\mathcal{I}_{i_k}}-\hat{y}_{\mathcal{I}_{i_k}})$,  which is essential for deriving a sharp contraction estimate for the sequence $\{x_k\}$.

Combining inequality \eqref{eq:xxxxx}, \eqref{eq:convergence_vk} and \eqref{eq:Du2020_2.5}, we obtain
\begin{equation}\label{eq:dzy_x}
\begin{split}
    &D_{f}^{x_{k+1}^{*}}(x_{k+1},\hat{x}) \leq  D_{f}^{x_{k}^{*}}(x_{k},\hat{x}) - \frac{\delta_x(2\mu_f(1-\zeta)-\delta_x)}{2\mu_f}\frac{1-{1}/{\varepsilon}}{\sigma_{\max}^2 (A_{\mathcal{I}_{i_k},:})}\|A_{\mathcal{I}_{i_k},:}x_k - \hat{y}_{\mathcal{I}_{i_k}}\|_2^2 \\
    &+\left(\frac{\delta_x}{4\zeta \sigma_{\min}^2(A_{\mathcal{I}_{i_k},:})}-\frac{\delta_x(2\mu_f(1-\zeta)-\delta_x)}{2\mu_f}\frac{1-\varepsilon}{\sigma_{\max}^2 (A_{\mathcal{I}_{i_k},:})}\right)\|z^*_{k+1,\mathcal{I}_{{i_k}}}-(b_{\mathcal{I}_{i_k}}-\hat{y}_{\mathcal{I}_{i_k}})\|^2_2.
\end{split}
\end{equation}
By taking the expectation conditional on the first $k$ iterations on both sides of \eqref{eq:dzy_x} and using the linearity of the expectation, we have
\begin{equation}  \label{eq:convergence_x_general_last_ada}
    \mathbb{E}\left[D_{f}^{x_{k+1}^{*}}(x_{k+1},\hat{x})\right]\leq \mathbb{E}\left[D_{f}^{x_{k}^{*}}(x_{k},\hat{x})\right] - c_1 \cdot \mathbb{E}\left[\|Ax_k - \hat{y}\|_2^2\right]+c_2\cdot \mathbb{E}\left[\|z_{k+1}^* - (b-\hat{y})\|_2^2\right].
\end{equation}
It follows from the inequality \eqref{eq:global error bound} in Theorem \ref{thm:theta} that $$D_{f}^{x_{k}^{*}}(x_{k},\hat{x})\leq \frac{1}{\gamma(\hat{x})}\|Ax_k-y\|_2^2.$$ 
Combining this estimate with \eqref{eq:convergence_x_general_last_ada} and taking expectations again yields
\begin{equation}
\begin{split}
   & \mathbb{E}\left[D_{f}^{x_{k+1}^{*}}(x_{k+1},\hat{x})\right]\leq \left(1-c_1\cdot\gamma(\hat{x})\right)\mathbb{E}\left[D_{f}^{x_{k}^{*}}(x_{k},\hat{x})\right] +c_2\cdot \mathbb{E}\left[\|z_{k+1}^* - (b-\hat{y})\|^2_2\right] \\
    &\leq \left(1-c_1\cdot\gamma(\hat{x})\right)\mathbb{E}\left[D_{f}^{x_{k}^{*}}(x_{k},\hat{x})\right] +c_2\cdot \frac{2L_g}{\mu_g}\left(1-\frac{\delta_z (2\mu_g-\delta_z)\theta(\hat{z})}{2\mu_g\beta_{\max}^{\mathcal{J}}\|A\|^2_F}\right)^{k+1}D_{h}^{\tilde{z}^{*}_0}(z_0, \hat{z}).
\end{split}
\end{equation}
where the second inequality follows from \eqref{eq:convergence of zk_general_ada}. Since $c_2$ is a constant and the second term vanishes as $k\to\infty$, it follows that the sequence $\{x_k\}$ converges in expectation to the unique solution $\hat{x}$. This completes the proof.
\end{proof}

\begin{remark} \label{remark:xk_general}
Note that the expected linear convergence rate of the primary iterates $\{x_k\}$ with an adaptive relaxation parameter, as stated in \eqref{eq:thm_x_ada_convergence_general}, is given by
 $$1-\frac{\delta_x(2\mu_f(1-\zeta)-\delta_x) (1-\frac{1}{\varepsilon})}{2\mu_f\beta_{\max}^\mathcal{I}\|A\|_F^2}\cdot \gamma(\hat{x}).$$
Since the above bound holds for any $\zeta,\varepsilon>0$, one may take $\varepsilon$ sufficiently large and $\zeta$ sufficiently small so that their influence becomes negligible. In this regime, the dominant term governing the convergence rate is $\delta_x(2\mu_f-\delta_x)$, which attains its maximum when $\delta_x=\mu_f$. Consequently, the leading convergence rate of $\{x_k\}$ reduces
to 
\begin{equation} \label{eq:25-12-12-1}
1-\frac{\mu_f}{2\beta_{\max}^{\mathcal{I}}\|A\|_F^2}\cdot \gamma(\hat{x}).
\end{equation}
\end{remark}

For comparison, the expected linear convergence rate of the REBK method is given by
\begin{equation} \label{eq:25-12-12-2}
1-\frac{\mu_f}{2\|A\|^2_F }\cdot \gamma(\hat{x}).
\end{equation}
Since $\beta_{\max}^{\mathcal{I}}<1$ by definition, a direct comparison of \eqref{eq:25-12-12-1} and \eqref{eq:25-12-12-2} shows that the proposed aRABEBK method admits a strictly faster theoretical convergence rate than REBK. This improvement is mainly attributed to the block-averaging mechanism, which effectively replaces the global  quantity $\|A\|_F^2$ by the smaller block-dependent quantity $\beta_{\max}^{\mathcal{I}}\|A\|_F^2$, leading to a tighter contraction factor.
This comparison highlights the benefit of incorporating block averaging within the extended Bregman-Kaczmarz framework.

On the other hand, although cRABEBK shares the same asymptotic convergence factor as \eqref{eq:25-12-12-1}, this rate is attained only when the constant relaxation parameter is chosen optimally according to the global quantity $\beta_{\max}^{\mathcal{I}}$. 
By contrast, the adaptive relaxation parameters employed in aRABEBK are computed from local residual and blockwise information and are updated dynamically at each iteration. While this adaptivity does not further improve the asymptotic contraction constant, it typically leads to substantially faster error reduction in the regimes where the iterates are still far from the solution.

Therefore, the advantage of aRABEBK over cRABEBK does not lie in the asymptotic rate itself, but in accelerating convergence toward the asymptotic regime and enhancing numerical robustness, as demonstrated by the numerical experiments.

Finally, consider the  problem \eqref{eq:main_partical} with the objective function
\[
f(x)=\lambda\|x\|_1+\frac{1}{2}\|x\|_2^2.
\]
In this case, the quantity \(\gamma(\hat{x})\) appearing in the convergence analysis
can be computed explicitly; see \eqref{eq:theta better form}. In particular, for
the sparse least-squares problem, the proposed method admits an explicit estimate
of the expected linear convergence factor, given by
\[
1 - \frac{\mu_f}{2 \beta^{\mathcal{I}}_{\max} \|A\|_F^2} \cdot \frac{1}{\tilde{\sigma}_{\min}^2(A)} \cdot \frac{|\hat{x}|_{\min} + 2\lambda}{|\hat{x}|_{\min}}.
\]

\section{Numerical experiments} \label{sec:numerical experiments}
In this section, we evaluate the performance of the proposed algorithm on two types of least-squares problems. The first is a sparse least-squares problem, with objective function $$f(x) = \lambda \|x\|_1 + \frac{1}{2}\|x\|_2^2.$$ The second is a classical minimum-norm least-squares problem, for which $$f(x) = \frac{1}{2}\|x\|_2^2.$$ In both cases, the data misfit function is given by $$g^*(b - y) = \frac{1}{2}\|b - y\|_2^2.$$ 

All numerical experiments are performed in \texttt{MATLAB} on a computer with an Intel Core i7 processor and 16GB of RAM.

For the sparse least-squares problem, the ground-truth sparse solution $\hat{x} \in \mathbb{R}^n$ is generated by drawing a random vector with $s$ nonzero entries from the standard normal distribution, where the sparsity level is set to $s = \lceil 0.01\,n\rceil$. The regularization parameter is fixed at $\lambda = 5$. For the minimum-norm least-squares problem, the reference solution $\hat{x}$ is computed using the \texttt{MATLAB} function \texttt{lsqminnorm}.

In both settings, the right-hand side is constructed as $$ b = A\hat{x} + e, $$
where $e$ denotes the noise vector. Following standard practice in minimum-norm and constrained least-squares problems, the noise is chosen to lie in $\mathrm{null}(A^\top)$ so that it does not affect the normal equations and therefore preserves the same least-squares solution set. Specifically, we generate $e = N v$, where the columns of $N$ form an orthonormal basis of
$\mathrm{null}(A^\top)$, generated using the \texttt{MATLAB} function \texttt{null}. The vector $v$ is drawn uniformly at random from the sphere $\partial B_\rho(0)$ with radius $\rho = q \|A\hat{x}\|_2$, where the parameter $q$ controls the relative noise level and is fixed to $q = 5$ throughout all experiments.

In the experiments, we evaluate the numerical performance of the following four state-of-the-art Kaczmarz-type methods:
\begin{enumerate}
   \item \textbf{REBK\cite{Schpfer2022ExtendedRK}}: randomized extended Bregman-Kaczmarz method.
    \item \textbf{REABK}\cite{du2020randomized}: randomized extended averaging block Kaczmarz method.
 \item \textbf{cRABEBK}\cite{dong2025relaxed}: randomized averaging block extended Bregman-Kaczmarz method with a constant relaxation parameter
   $\beta_{\max}^{-1}$.
   \item \textbf{aRABEBK}: adaptive randomized averaging block extended Bregman-Kacz- marz method, corresponding to Algorithm \ref{alg:aRABEBK} with $\delta_{z}=\delta_{x}=1$.
\end{enumerate}

For the sparse least-squares setting, the gradient mappings appearing in Algorithm \ref{alg:aRABEBK} admit closed-form expressions. In particular, since \( \nabla g^*(z_k^*) = z_k^* \), step 5 simplifies to \( z_{k+1} = z_{k+1}^* \). Moreover, step 8 simplifies to
\[
x_{k+1} = S_\lambda(x_{k+1}^*),
\]
where \( S_\lambda(\cdot) \) denotes the soft-thresholding (shrinkage) operator. Since the {REABK} method is designed for smooth least-squares problems and cannot directly handle sparsity-inducing regularization, it is excluded from this comparison. Therefore, in the sparse least-squares experiments, we compare the performance of {REBK}, cRABEBK, and {aRABEBK}.

For the minimum-norm least-squares problem, both step~5 and step~8 in
Algorithm~\ref{alg:aRABEBK} reduce to identity mappings, namely
\( z_{k+1} = z_{k+1}^* \) and \( x_{k+1} = x_{k+1}^* \). In this setting, the
constant-relaxation method {cRABEBK} coincides with {REABK}, which
has been shown to outperform {REBK} in \cite{du2020randomized}. Therefore, we compare
{REABK} and {aRABEBK} in order to isolate and highlight the effect
of adaptive relaxation.

For all algorithms, we initialize $z_0 = b$ and $x_0^* = 0$. Each algorithm is terminated either when the iteration count exceeds $5 \times 10^6$ or when the relative error 
\[
\mathrm{ERR} = \frac{\|x_k - \hat{x}\|_2}{\|\hat{x}\|_2}
\] 
falls below $10^{-5}$.

Our algorithm relies on a block structure. We assume that the row and column subsets $\{\mathcal{I}_i\}_{i=1}^{s}$ and $\{\mathcal{J}_j\}_{j=1}^{t}$ have uniform size $\tau$ wherever possible, i.e., $|\mathcal{I}_i| = |\mathcal{J}_j| = \tau$ for $i = 1,\dots,s-1$ and $j = 1,\dots,t-1$. The row partition is defined as
\begin{equation*}
   \begin{split}
       \mathcal{I}_i &= \{(i-1)\tau + 1,\, (i-1)\tau + 2,\, \ldots,\, i\tau\}, \quad i = 1,2,\ldots,s-1, \\
       \mathcal{I}_s &= \{(s-1)\tau + 1,\, (s-1)\tau + 2,\, \ldots,\, m\}, \qquad |\mathcal{I}_s| \leq \tau,
   \end{split}
\end{equation*}
and the column partition as
\begin{equation*}
   \begin{split}
       \mathcal{J}_j &= \{(j-1)\tau + 1,\, (j-1)\tau + 2,\, \ldots,\, j\tau\}, \quad j = 1,2,\ldots,t-1,\\
       \mathcal{J}_t &= \{(t-1)\tau + 1,\, (t-1)\tau + 2,\, \ldots,\, n\}, \qquad |\mathcal{J}_t| \leq \tau.
   \end{split}
\end{equation*}
In all experiments, the block size $\tau$ for REABK, RABEBK and aRABEBK are set to 20. 

In the cRABEBK method, the relaxation parameters $\alpha_k^{(z)}$ and $\alpha_k^{(x)}$ are fixed and chosen as
\[
\alpha = 1/\beta_{\max}, \qquad 
\beta_{\max} :=\max \left\{\beta^{\mathcal{I}}_{\max},\beta^{\mathcal{J}}_{\max}\right\}.
\]

In contrast, the aRABEBK method employs adaptive relaxation parameters computed from local residual information at each iteration. In all experiments, we fix $\delta_z = \delta_x = 1$, which corresponds to the admissible adaptive step size allowed by the theory. This parameter-free choice was found to be robust across all tested problem instances and eliminates the need for
problem-dependent tuning.

\subsection{Experiments on synthetic data}
In this subsection, we assess the performance of the compared algorithms on two classes of synthetic test matrices $A$, in order to illustrate their behaviour under different problem settings.

\subsubsection{Gaussian matrix}
We begin by considering Gaussian coefficient matrices, generated using the \texttt{MATLAB} command \texttt{randn}.

\begin{table}[!htbp] 
\scriptsize 
\centering 
\caption{ Numerical results for Gaussian matrices in the sparse least-squares case. }\label{table:Guassian}  
\begin{tabular}{|c|c|c|c|c|c|c|c|c|c|}
\hline
\multirow{2}{*} {$m\times n$}
&\multicolumn{2}{c}{REBK} &\multicolumn{2}{|c}{cRABEBK} &\multicolumn{2}{|c|}{aRABEBK} \\ 
\cline{2-7} 
& IT &CPU  &IT &CPU &IT &CPU \\ 
\hline  
$1000\times 500$   &	 90624&	 3.896&	  6795&	1.340&	 4697&	1.045  \\  
\hline  
$500\times 1000$   &	 55189&	 2.295&	 	 4402&	 0.915&	 2844&	0.389  \\ 
\hline  
$2000\times 1000$  &	 331375&	177.384 &	 	 22066&	20.255&	 15560&	12.592  \\  
\hline  
$1000\times 2000$   &	 698962&	 422.960&	 	 46252&	52.037&	 34254&	31.570  \\  
\hline  
$4000\times 2000$   &	 195094&	 701.151&	 	 11870&	50.731&	 8814&	36.463  \\   
\hline  
$2000\times 4000$   &	 906598&	 2941.993&	 	 55125&	258.776&	 49152&	 231.407 \\   
\hline  
\end{tabular}  
\end{table} 
Table~\ref{table:Guassian} reports the numerical results for Gaussian matrices of various sizes. Both {cRABEBK} and {aRABEBK} significantly outperform {REBK} in terms of iteration counts and CPU time across all tested dimensions, which can be attributed to the use of block averaging that exploits information from multiple hyperplanes at each iteration.

Moreover, the adaptive variant {aRABEBK} consistently achieves the best performance among all methods. For example, when \( m \times n = 2000 \times 4000 \), {aRABEBK} is approximately {12.7$\times$ faster} than {REBK} in CPU time, reducing the runtime from 2941.99 seconds to 231.41 seconds. These results demonstrate that adaptive relaxation provides a further and nontrivial efficiency gain beyond that achieved by block averaging alone.

Comparing {aRABEBK} with its constant-relaxation counterpart {cRABEBK}, we observe that adaptive relaxation yields a further and non-negligible improvement. In the same test, {aRABEBK} reduces the iteration count from 55,125 to 49,152 and achieves about a {1.12$\times$ speedup} in CPU time (231.41 seconds versus 258.78 seconds). This consistent advantage across all tested problem sizes demonstrates that adaptively chosen relaxation parameters can effectively accelerate convergence beyond what is achievable with a fixed relaxation strategy.
\begin{figure}[!htbp] 
\centering
	\subfigure[$m=1000,n=500$]   
	{
		\begin{minipage}[t]{0.31\linewidth}
			\centering
			\includegraphics[width=1\textwidth]{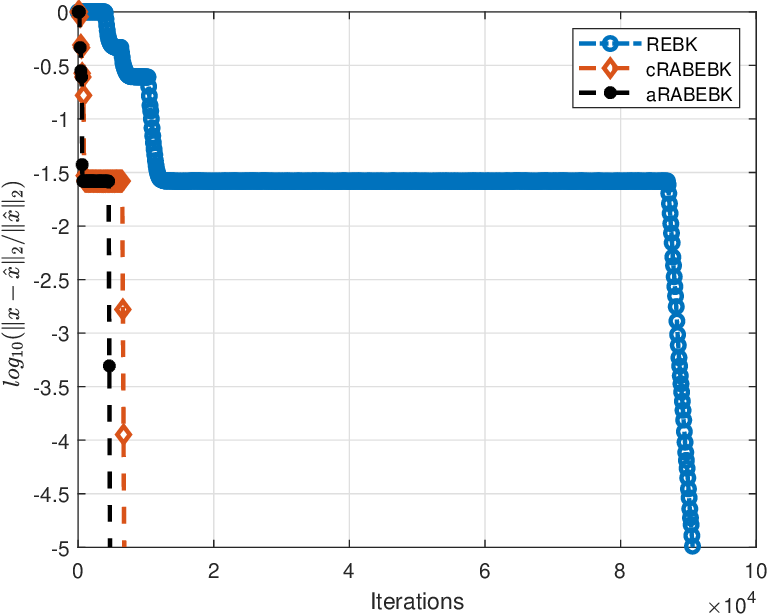}
		\end{minipage}
	}
	\subfigure[$m=2000,n=1000$]  
	{
		\begin{minipage}[t]{0.31\linewidth}
			\centering
			\includegraphics[width=1\textwidth]{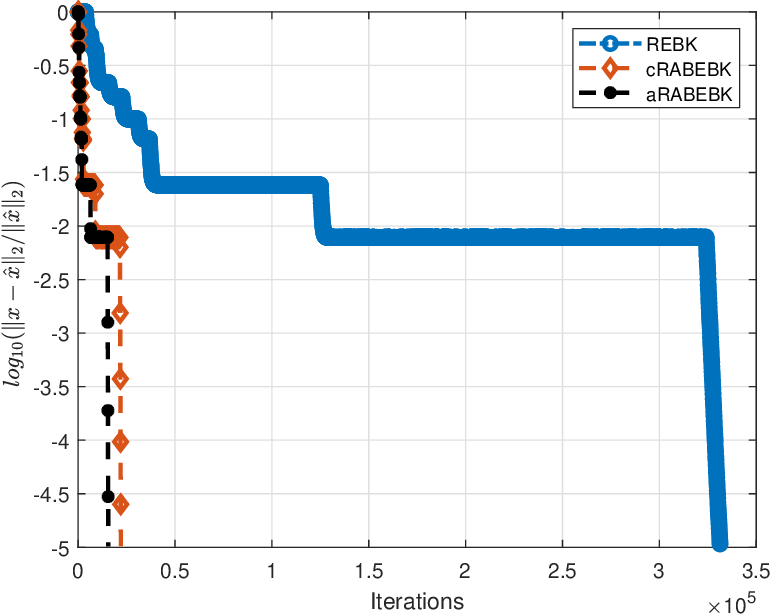}
		\end{minipage}
      }
    \subfigure[$m=4000,n=2000$]   
	{
		\begin{minipage}[t]{0.31\linewidth}
			\centering
			\includegraphics[width=1\textwidth]{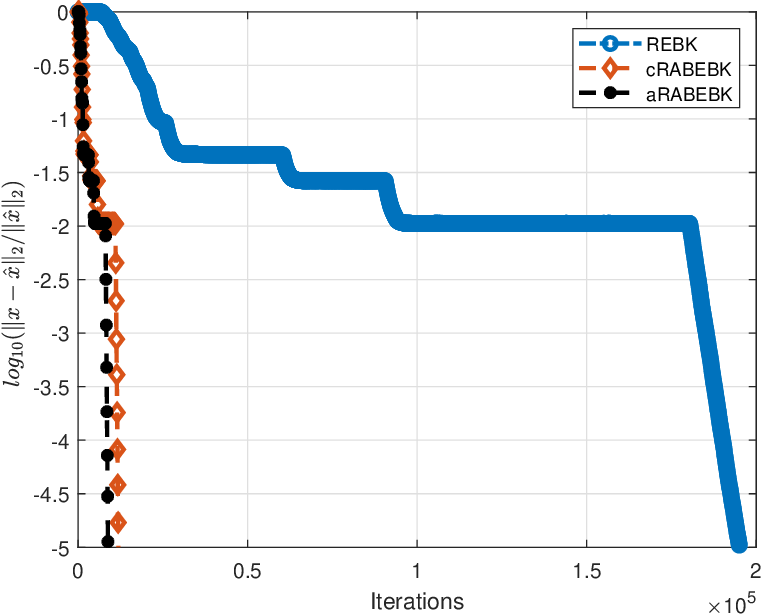}
		\end{minipage}
	}
  
	\subfigure[$m=500,n=1000$]   
	{
		\begin{minipage}[t]{0.31\linewidth}
			\centering
			\includegraphics[width=1\textwidth]{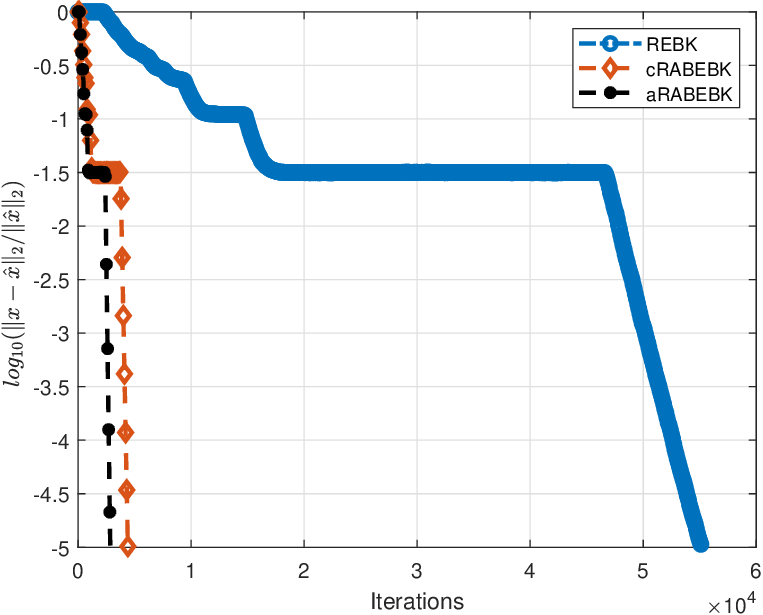}
		\end{minipage}
	}
	\subfigure[$m=1000,n=2000$]  
	{
		\begin{minipage}[t]{0.31\linewidth}
			\centering
			\includegraphics[width=1\textwidth]{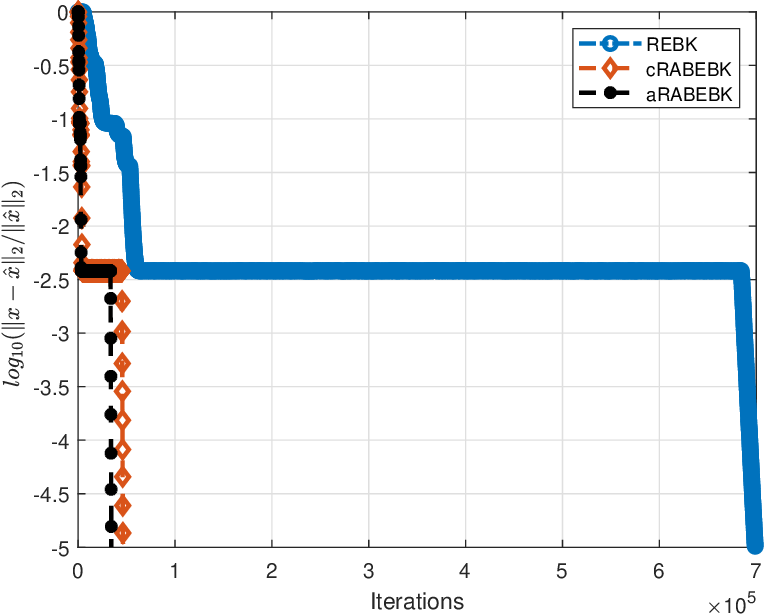}
		\end{minipage}
      }
    \subfigure[$m=2000,n=4000$]   
	{
		\begin{minipage}[t]{0.31\linewidth}
			\centering
			\includegraphics[width=1\textwidth]{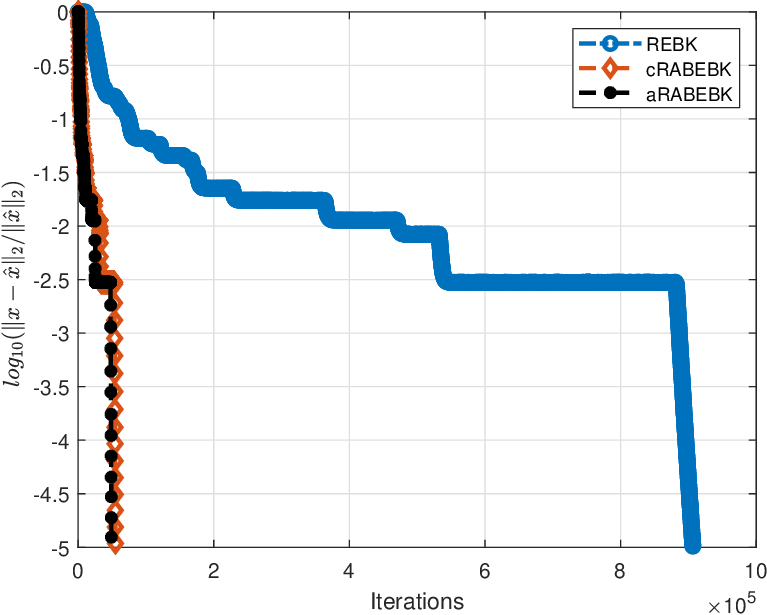}
		\end{minipage}
	}

\caption{Curves of the relative error versus the number of iterations for the sparse least-squares case with Gaussian matrices. Top: overdetermined matrices; Bottom: underdetermined matrices.}
 \label{fig:Guassian}
\end{figure}

Figure~\ref{fig:Guassian} illustrates the convergence behavior of the REBK, cRABEBK, and aRABEBK methods for the sparse least-squares problem with Gaussian matrices of six different sizes. The plots show the relative error versus the number of iterations. It is evident that aRABEBK (black line) converges substantially faster than both REBK and cRABEBK across all tested problem dimensions, highlighting the effectiveness of adaptive relaxation in accelerating convergence.

We now report the numerical performance of the REABK and aRABEBK methods for Gaussian coefficient matrices in the minimum-norm least-squares setting.

\begin{table}[!htbp] 
\centering 
\caption{Numerical results for Gaussian matrices in minimum-norm least-squares case.}\label{table:mini_Gaussian}  
\begin{tabular}{|c|c|c|c|c|c|}
\hline
\multirow{2}{*}{$m \times n$}
& \multicolumn{2}{c|}{REABK} 
& \multicolumn{2}{c|}{aRABEBK} \\ 
\cline{2-5} 
& IT & CPU & IT & CPU \\ 
\hline  
$1000 \times 500$   &  4879  & 0.976 &  3468  & 0.506  \\  
\hline  
$500 \times 1000$   &  4564  & 0.947 &  3202  & 0.438  \\ 
\hline  
$2000 \times 1000$  & 8639  &6.807 & 6268  &4.788  \\  
\hline  
$1000 \times 2000$  & 8370  &6.382 & 6759  &4.600  \\  
\hline  
$4000 \times 2000$  & 15002  &65.107 &  12983  &54.854  \\   
\hline  
$2000 \times 4000$  & 15917  &67.942 & 13176  &56.959 \\   
\hline  
\end{tabular}  
\end{table}
From Table~\ref{table:mini_Gaussian}, it is evident that aRABEBK consistently outperforms REABK in terms of both iteration count and computational time across all tested matrix sizes. For instance, when $m \times n = 4000 \times 2000$, aRABEBK completes the task in only $12{,}983$ iterations and $54.854$ seconds, whereas REABK requires $15{,}002$ iterations and $65.107$ seconds.

Figure~\ref{fig:mini_Guassian} presents the convergence curves of the relative error versus the number of iterations for the REABK and aRABEBK methods on Gaussian matrices of six different sizes. The results clearly show that aRABEBK (black curves) converges significantly faster than REABK in all tested cases.

Note that, in the minimum-norm least-squares setting, the cRABEBK method coincides with REABK. Therefore, the experimental results reported in Table~\ref{table:mini_Gaussian} and Figure~\ref{fig:mini_Guassian} convincingly demonstrate the effectiveness of the proposed adaptive
relaxation strategy, with consistent improvements over both REABK and its constant-relaxation variant.

\begin{figure}[!htbp] 
\centering
	\subfigure[$m=1000,n=500$]   
	{
		\begin{minipage}[t]{0.31\linewidth}
			\centering
			\includegraphics[width=1\textwidth]{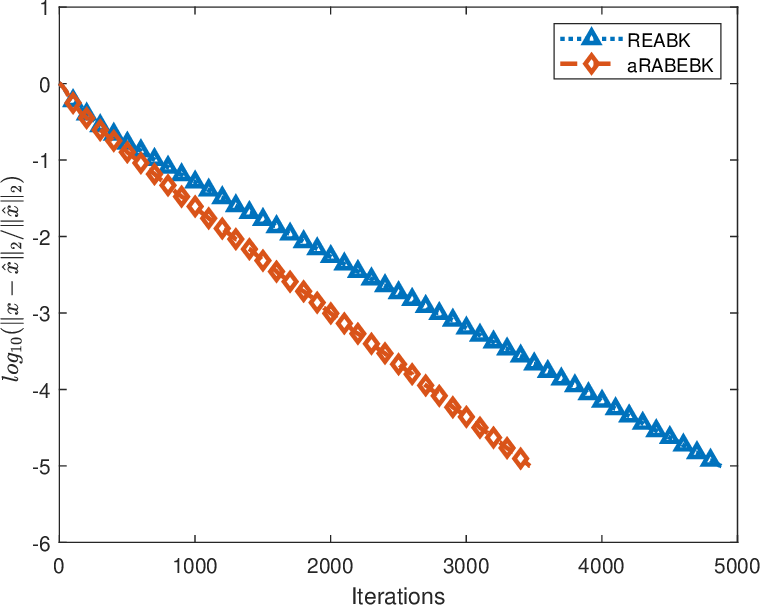}
		\end{minipage}
	}
	\subfigure[$m=2000,n=1000$]  
	{
		\begin{minipage}[t]{0.31\linewidth}
			\centering
			\includegraphics[width=1\textwidth]{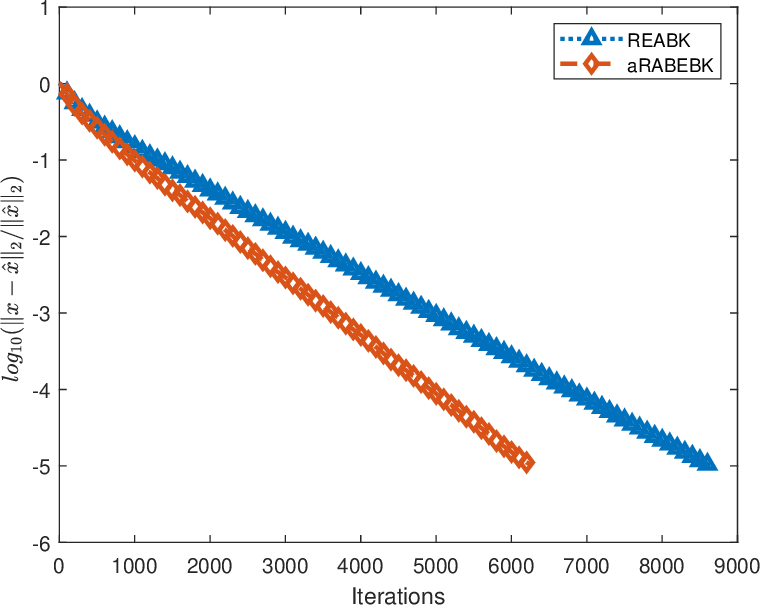}
		\end{minipage}
      }
      \subfigure[$m=4000,n=2000$]   
	{
		\begin{minipage}[t]{0.31\linewidth}
			\centering
			\includegraphics[width=1\textwidth]{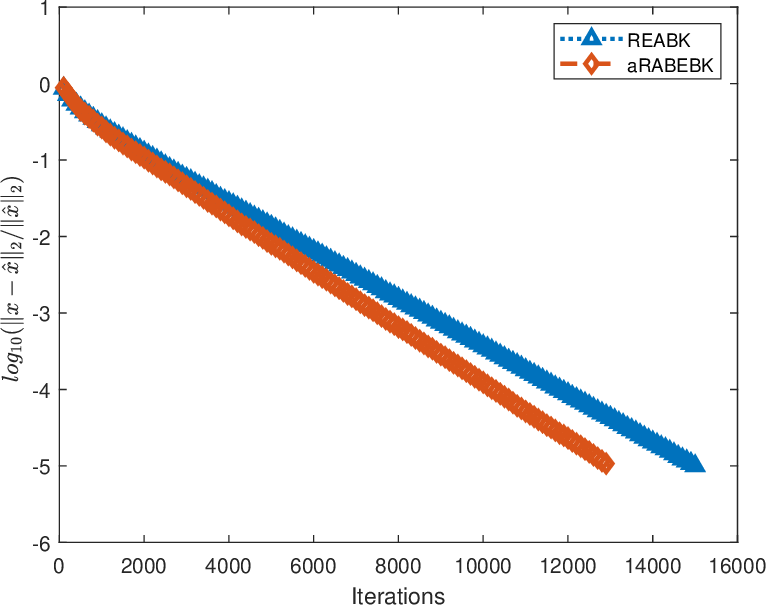}
		\end{minipage}
	}
  
	\subfigure[$m=500,n=1000$]   
	{
		\begin{minipage}[t]{0.31\linewidth}
			\centering
			\includegraphics[width=1\textwidth]{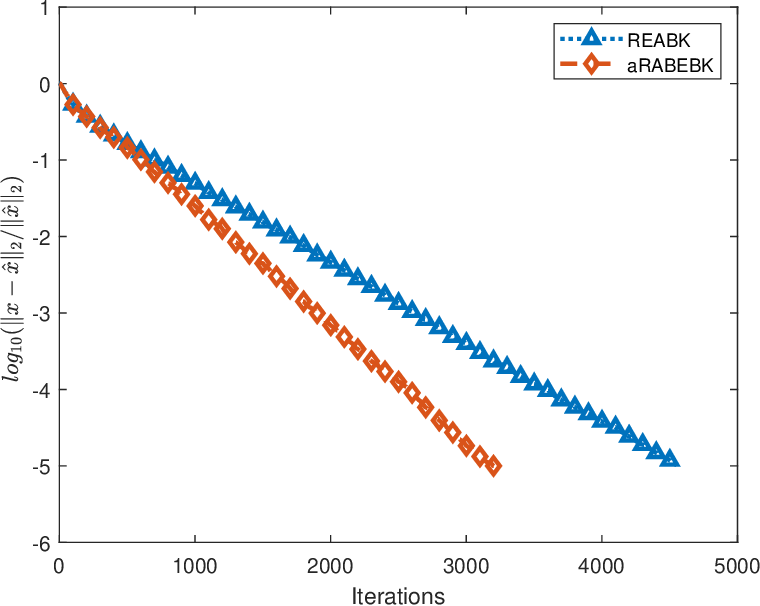}
		\end{minipage}
	}
	\subfigure[$m=1000,n=2000$]  
	{
		\begin{minipage}[t]{0.31\linewidth}
			\centering
			\includegraphics[width=1\textwidth]{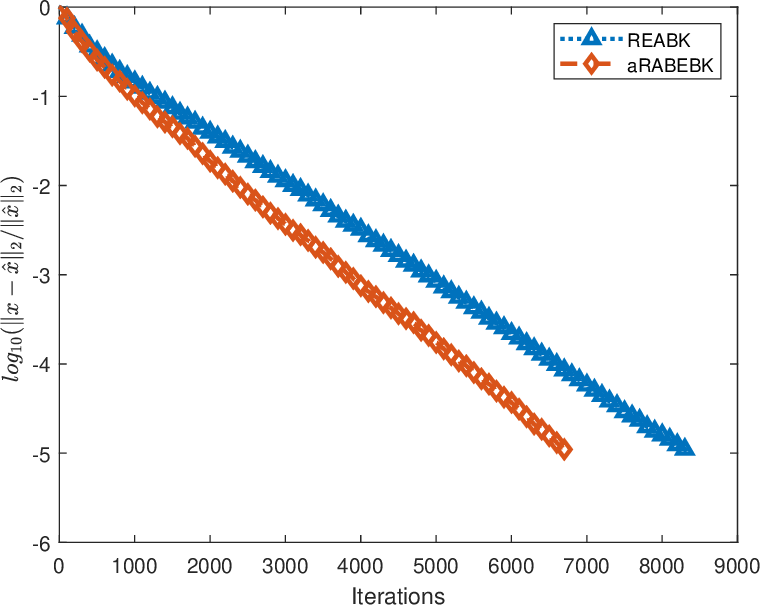}
		\end{minipage}
      }
      \subfigure[$m=2000,n=4000$]   
	{
		\begin{minipage}[t]{0.31\linewidth}
			\centering
			\includegraphics[width=1\textwidth]{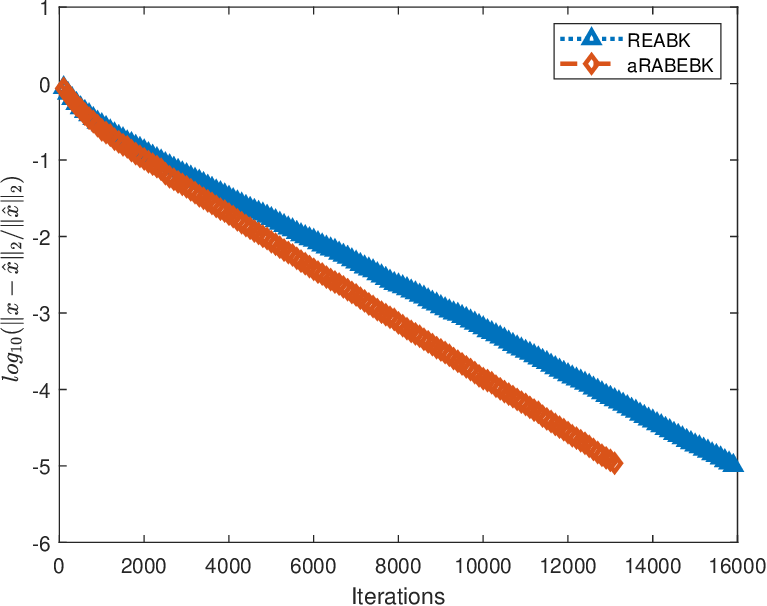}
		\end{minipage}
	}
  
\caption{The curves of relative error versus the number of iterations for overdetermined Gaussian matrices (top) and underdetermined Gaussian matrices (bottom) in the minimum-norm least-squares case.} 	
		\label{fig:mini_Guassian}
\end{figure}

\subsubsection{Experiments on structured random matrix} \label{subsubsub:rank-deficient}
Next, we generate the coefficient matrices using structured random matrices, following the procedure described in \cite{du2020randomized}, which we briefly summarize here for completeness.

For given $m$, $n$, the desired rank $r = \textrm{rank}( A)$, and a condition number $\kappa>1$, we construct a matrix $ A$ by $$ A = UDV^\top,$$ where $  U\in r^{m\times r} $ and $V\in \mathbb{R}^{n\times r}$. Entries of $ U$ and $ V$ are generated from a standard normal distribution, and then, columns are orthonormalized, $${\tt [U,\sim] = qr(randn(m,r),0);\qquad  [V,\sim] = qr(randn(n,r),0);}$$ The matrix $  D$ is an $r\times r$ diagonal matrix whose diagonal entries are uniformly distributed numbers in $(1,\kappa)$, $${\tt D = diag(1+(\kappa-1).*rand(r,1));}$$ with this construction, the condition number of \( A \) is upper bounded by \( \kappa \).

Table \ref{table:rank_deficient} reports the iteration counts and CPU times  for REBK, cRABEBK and aRABEBK in the sparse least-squares setting. Both RABEBK variants substantially outperform REBK, demonstrating the benefit of projecting onto multiple selected blocks simultaneously. Moreover, the adaptive method aRABEBK consistently achieves the best performance. For instance, in the fourth test case with $m\times n = 1000\times 2000$, aRABEBK reaches the desired accuracy in only 8.4 seconds, compared to 15.8 seconds for cRABEBK and 113.4 seconds for REBK.

\begin{table}[!htbp] 
\scriptsize 
\centering 
\caption{Numerical results for constructed matrices in the sparse least-squares setting.}
\label{table:rank_deficient}  
\begin{tabular}{|c|c|c|c|c|c|c|c|c|c|c|c|}
\hline
\multirow{2}{*} {$m\times n$} & \multirow{2}{*} {rank} &\multirow{2}{*} {$\kappa$} 
&\multicolumn{2}{c}{REBK} &\multicolumn{2}{|c}{cRABEBK} &\multicolumn{2}{|c|}{aRABEBK} \\ 
\cline{4-9} 
& &  & IT &CPU  &IT &CPU &IT &CPU \\ 
\hline  
$1000\times 500$  &	480 &	10 &	 94413&	 6.219&	  7249&	2.112&	 5051&	1.164  \\  
\hline  
$500\times 1000$  &	480 &	10 &	 210660&	 12.205&	 	 16126&	 5.385&	 11043&	2.777  \\ 
\hline  
$2000\times 1000$ &	900 &	5 &	 83623&	57.312 &	 	 5637&	7.132&	 4263&	4.749  \\  
\hline  
$1000\times 2000$  &	900 &	5 &	 130349&	 113.437&	 	 8655&	15.761&	 6757&	8.435  \\   
\hline  
$4000\times 2000$  &	1500 &	2 &	 313228&	 1322.141&	 	 19800&	99.936&	 14957&	73.886  \\   
\hline  
$2000\times 4000$  &	1500 &	2 &	 764331&	 3008.484&	 	 47675&	264.677&	 39278&	206.083  \\   
\hline  
\end{tabular}  
\end{table} 

\begin{figure}[!htbp] 
\centering
	\subfigure[$m=1000,n=500,r=480,\kappa=10$]   
	{
		\begin{minipage}[t]{0.31\linewidth}
			\centering
			\includegraphics[width=1\textwidth]{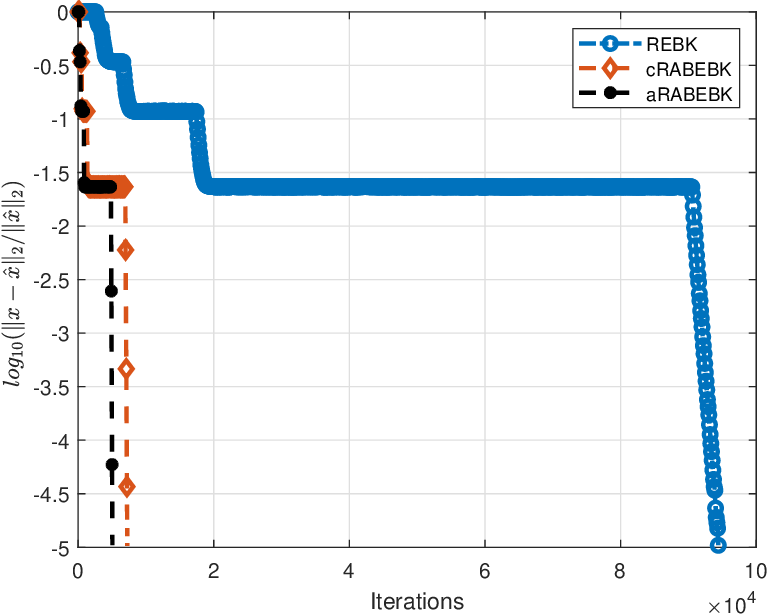}
		\end{minipage}
	}
	\subfigure[$m=2000,n=1000,r=900,\kappa=5$]  
	{
		\begin{minipage}[t]{0.31\linewidth}
			\centering
			\includegraphics[width=1\textwidth]{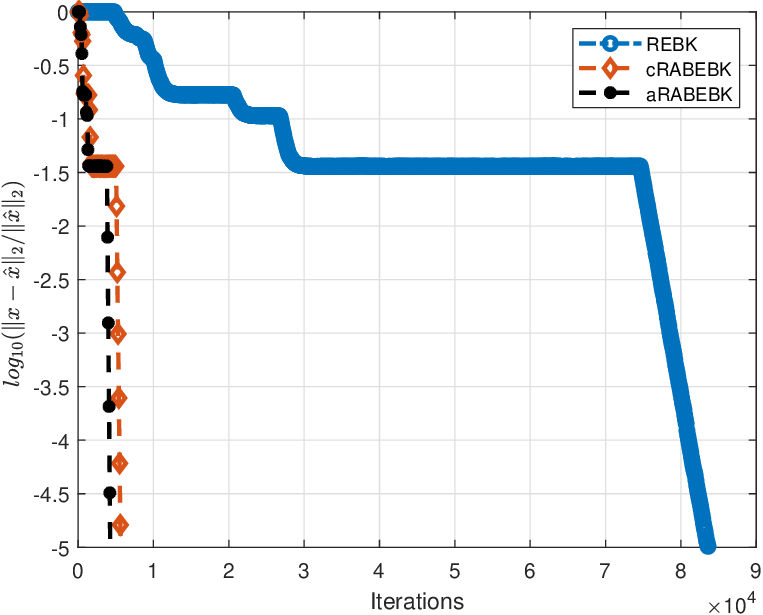}
		\end{minipage}
      }
      \subfigure[$m=4000,n=2000,r=1500,\kappa=2$]   
	{
		\begin{minipage}[t]{0.31\linewidth}
			\centering
			\includegraphics[width=1\textwidth]{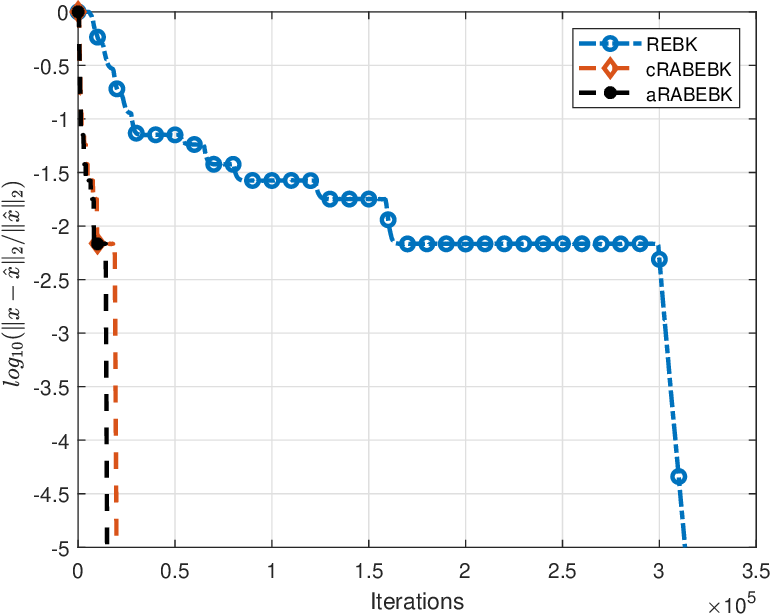}
		\end{minipage}
	}
  
	\subfigure[$m=500,n=1000,r=480,\kappa=10$]   
	{
		\begin{minipage}[t]{0.31\linewidth}
			\centering
			\includegraphics[width=1\textwidth]{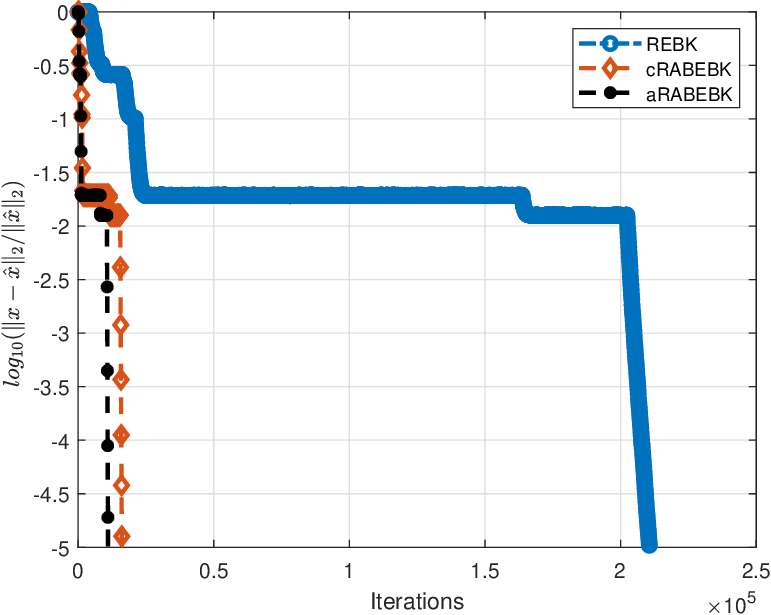}
		\end{minipage}
	}
	\subfigure[$m=1000,n=2000,r=900,\kappa=5$]  
	{
		\begin{minipage}[t]{0.31\linewidth}
			\centering
			\includegraphics[width=1\textwidth]{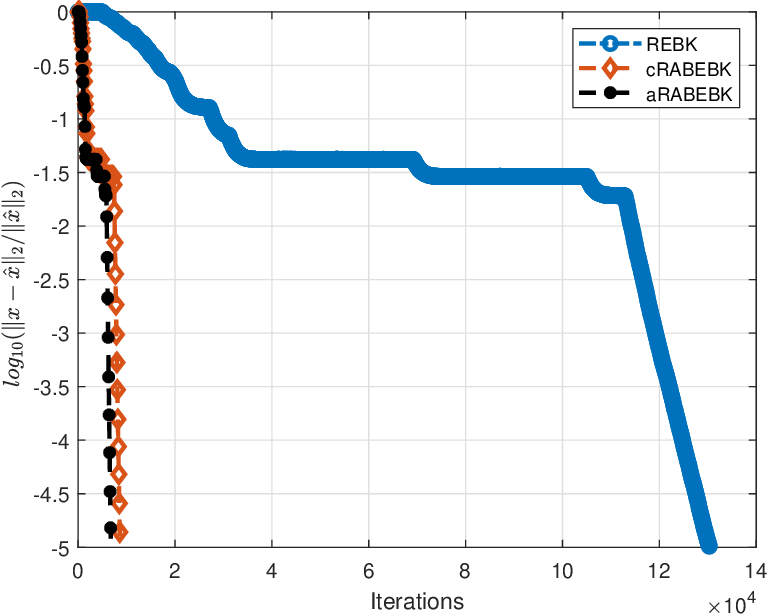}
		\end{minipage}
      }
      \subfigure[$m=2000,n=4000,r=1500,\kappa=2$]   
	{
		\begin{minipage}[t]{0.31\linewidth}
			\centering
			\includegraphics[width=1\textwidth]{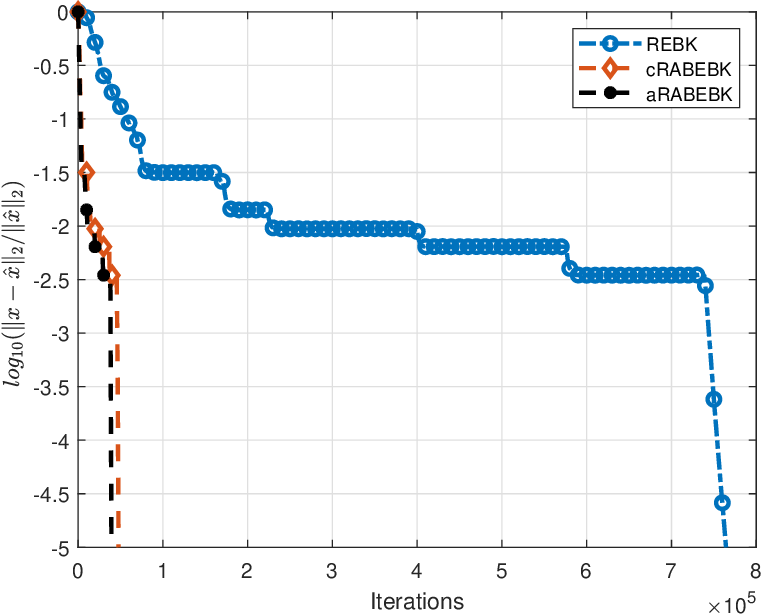}
		\end{minipage}
	}
  
\caption{The curves of relative error versus the number of iterations for overdetermined constructed matrices (top) and underdetermined constructed matrices (bottom) in the sparse least-squares setting.} 	
		\label{fig:rank_deficient}
\end{figure}

Figure~\ref{fig:rank_deficient} depicts the convergence curves of the relative error versus the number of iterations for the REBK, cRABEBK, and aRABEBK methods, illustrating their behaviour on structured random matrices of six different sizes in the sparse least-squares setting. It is evident from the figure that the adaptive method aRABEBK (black curves) converges significantly faster than its counterparts across all tested sizes, highlighting the combined benefit of the block-averaging
strategy and adaptive relaxation in accelerating convergence.

We next turn to the minimum-norm least-squares problem. Table~\ref{table:mini_rank_deficient} reports the numerical performance of the REABK and aRABEBK methods for structured rank-deficient matrices with various sizes, ranks, and condition numbers~$\kappa$.
The reported results include the iteration counts and CPU time required to reach the prescribed accuracy. We reiterate that, in the minimum-norm least-squares setting, the constant-relaxation variant cRABEBK coincides with REABK and is therefore not reported separately.

As shown in Table~\ref{table:mini_rank_deficient}, aRABEBK consistently outperforms REABK in terms of both iteration count and computational time across all test cases. For example, when $m \times n = 1000 \times 500$ with $\kappa = 10$, aRABEBK converges in $10{,}327$ iterations and $1.593$ seconds, whereas REABK requires $15{,}580$ iterations and $3.271$ seconds, corresponding to approximately a twofold reduction in runtime. These results further confirm that the adaptive relaxation strategy provides a clear advantage over REABK, as well as its constant-relaxation counterpart cRABEBK, in the minimum-norm least-squares setting.

\begin{table}[!htbp] 
\centering 
\caption{Numerical results for constructed matrices in minimum-norm least-squares case.}\label{table:mini_rank_deficient}  
\begin{tabular}{|c|c|c|c|c|c|c|}
\hline
\multirow{2}{*}{$m \times n$} & \multirow{2}{*}{rank} & \multirow{2}{*}{$\kappa$} 
& \multicolumn{2}{c|}{REABK} & \multicolumn{2}{c|}{aRABEBK} \\
\cline{4-7}
& & & IT & CPU & IT & CPU \\
\hline  
$1000 \times 500$   & 480  & 10 &  15580  & 3.271 &  10327  & 1.593 \\
\hline  
$500 \times 1000$   & 480  & 10 & 15020  & 2.995 & 9603  & 1.231 \\
\hline  
$2000 \times 1000$  & 900  &  5 &  7435  & 6.370 &  5626  & 4.343 \\
\hline  
$1000 \times 2000$  & 900  &  5 &  7504  &5.955 &  5659  & 3.869 \\
\hline  
$4000 \times 2000$  & 1500 &  2 & 2893  &11.583 & 2499  &9.659 \\
\hline  
$2000 \times 4000$  & 1500 &  2 & 2820  &11.271 & 2540 &9.562 \\
\hline  
\end{tabular}  
\end{table}

Figure~\ref{fig:mininorm_constructed} illustrates the convergence behaviour of the REABK and aRABEBK methods for minimum-norm least-squares problems. In both overdetermined and underdetermined settings, aRABEBK consistently exhibits faster convergence, highlighting the effectiveness of the proposed adaptive step size selection.

\begin{figure}[!htbp] 
\centering
	\subfigure[$m=1000,n=500$]   
	{
		\begin{minipage}[t]{0.31\linewidth}
			\centering
			\includegraphics[width=1\textwidth]{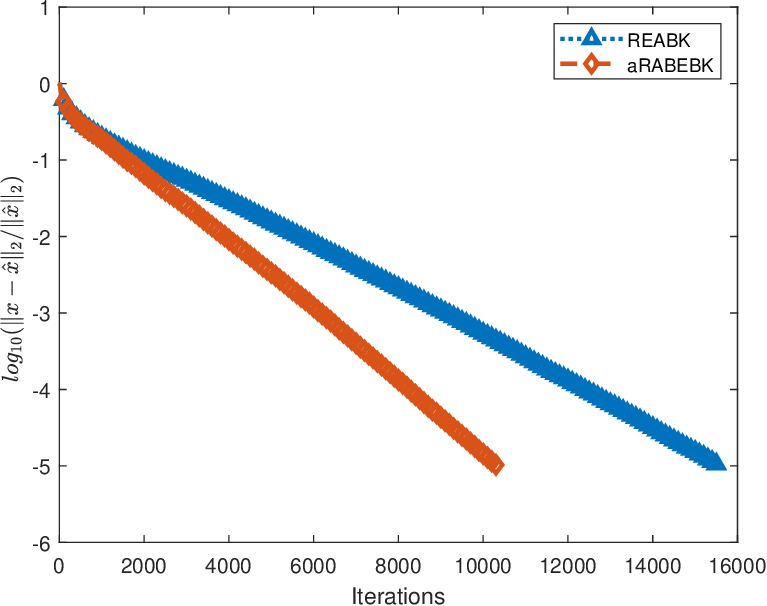}
		\end{minipage}
	}
	\subfigure[$m=2000,n=1000$]  
	{
		\begin{minipage}[t]{0.31\linewidth}
			\centering
			\includegraphics[width=1\textwidth]{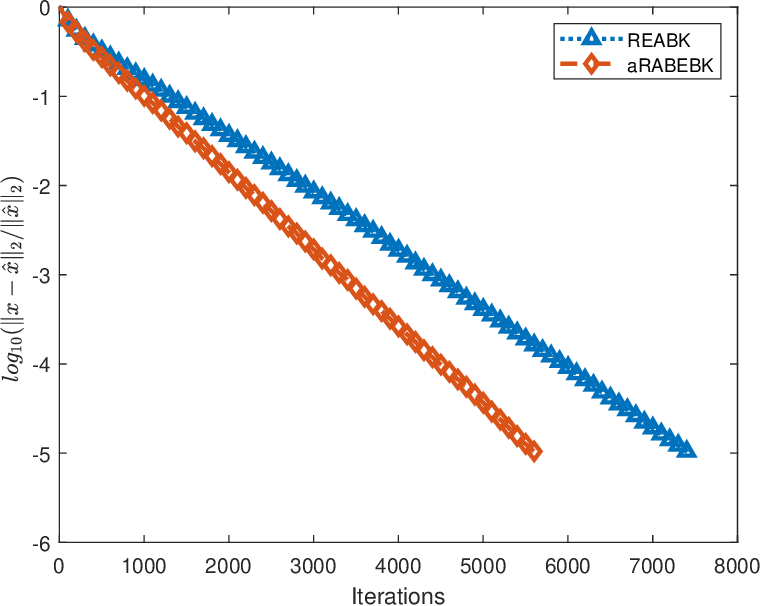}
		\end{minipage}
      }
      \subfigure[$m=4000,n=2000$]   
	{
		\begin{minipage}[t]{0.31\linewidth}
			\centering
			\includegraphics[width=1\textwidth]{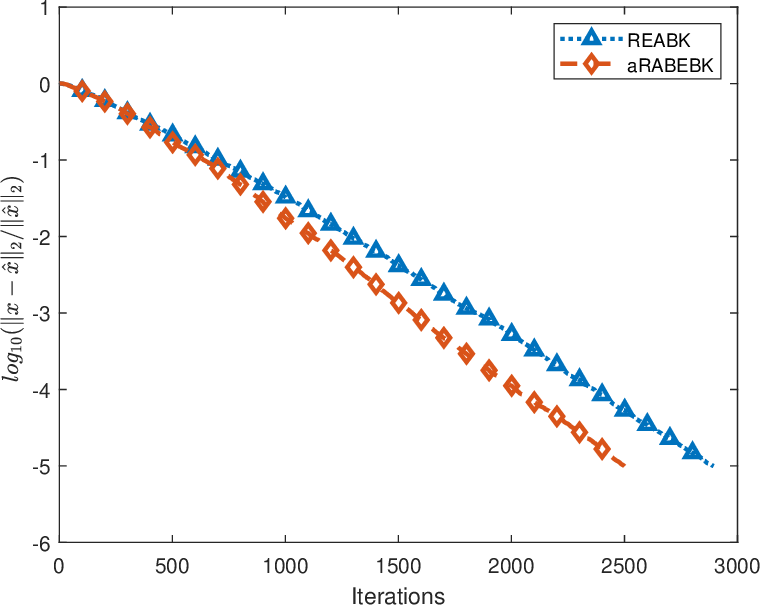}
		\end{minipage}
	}
  
	\subfigure[$m=500,n=1000$]   
	{
		\begin{minipage}[t]{0.31\linewidth}
			\centering
			\includegraphics[width=1\textwidth]{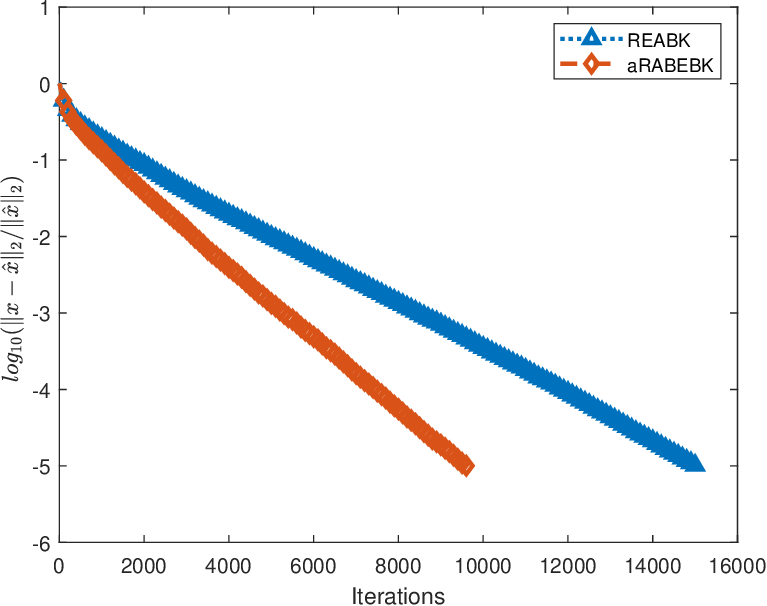}
		\end{minipage}
	}
	\subfigure[$m=1000,n=2000$]  
	{
		\begin{minipage}[t]{0.31\linewidth}
			\centering
			\includegraphics[width=1\textwidth]{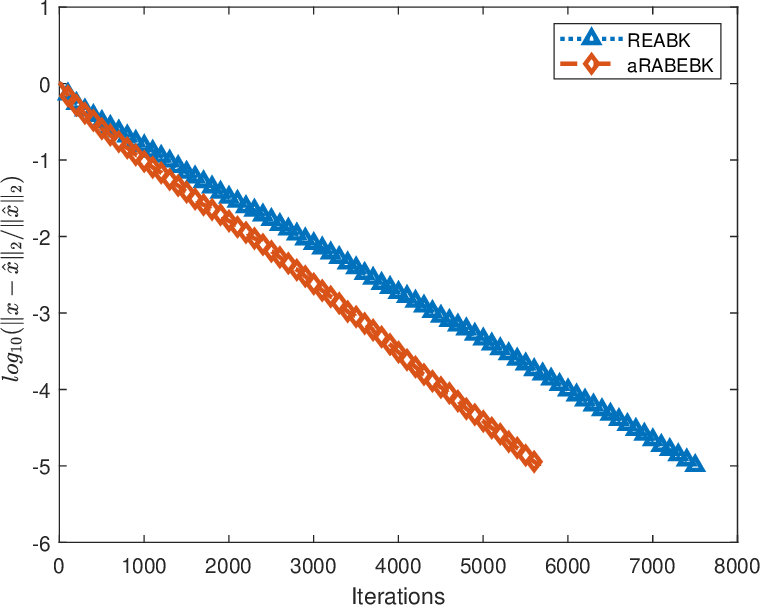}
		\end{minipage}
      }
      \subfigure[$m=2000,n=4000$]   
	{
		\begin{minipage}[t]{0.31\linewidth}
			\centering
			\includegraphics[width=1\textwidth]{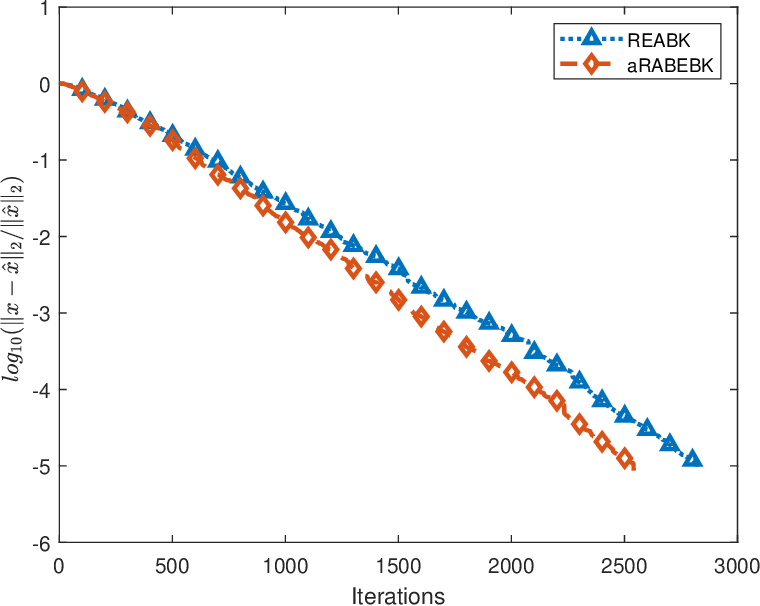}
		\end{minipage}
	}

\caption{The curves of relative error versus the number of iterations for overdetermined constructed matrices (top) and underdetermined constructed matrices (bottom) in the minimum-norm least-squares setting.} 	
		\label{fig:mininorm_constructed}
\end{figure}

The numerical results on structured rank-deficient problems consistently demonstrate the superior efficiency of the proposed adaptive relaxation strategy across both sparse and minimum-norm least-squares settings.

\subsection{Experiment on image recovery}
To further demonstrate the effectiveness of the proposed methods in practical scenarios, we consider an image recovery task based on the MNIST dataset, which is widely used in machine learning and computer vision. The image recovery of the MNIST dataset is available in the Tensorflow framework \cite{abadi2016tensorflow}. 

We consider two types of image recovery problems. In the sparse least-squares setting, the coefficient matrix \( A \in \mathbb{R}^{500 \times 784} \) is generated with i.i.d.\ Gaussian entries, yielding an underdetermined system that admits infinitely many solutions, among which sparsity-promoting behavior is of primary interest. In the minimum-norm least-squares setting, we set \( A \in \mathbb{R}^{2000 \times 784} \), resulting in an overdetermined system whose solution is characterized by the minimum Euclidean norm.

In both settings, the right-hand side vector is constructed as $$b = A \hat{x} + e,$$ where \( \hat{x} \in \mathbb{R}^{784} \) is obtained by vectorizing an MNIST image of size \(28 \times 28\), and \( e \) is an additive noise vector generated following the same noise construction procedure used throughout the numerical experiments.

The peak signal-to-noise ratio (PSNR) is used to measure the quality of the recovered image. It is defined as
\begin{equation}
\text{PSNR} := 10 \log_{10} \left( \frac{ \sum_{i=1}^n x_i^2 }{ \sum_{i=1}^n (x_i - \hat{x}_i)^2 } \right),
\end{equation}
where \(  \hat{x} \in \mathbb{R}^n \) denotes the ground-truth image and \( x \in \mathbb{R}^n \) is the reconstructed image.


\begin{figure}[!htbp]
  \centering
  \includegraphics[width=1\linewidth]{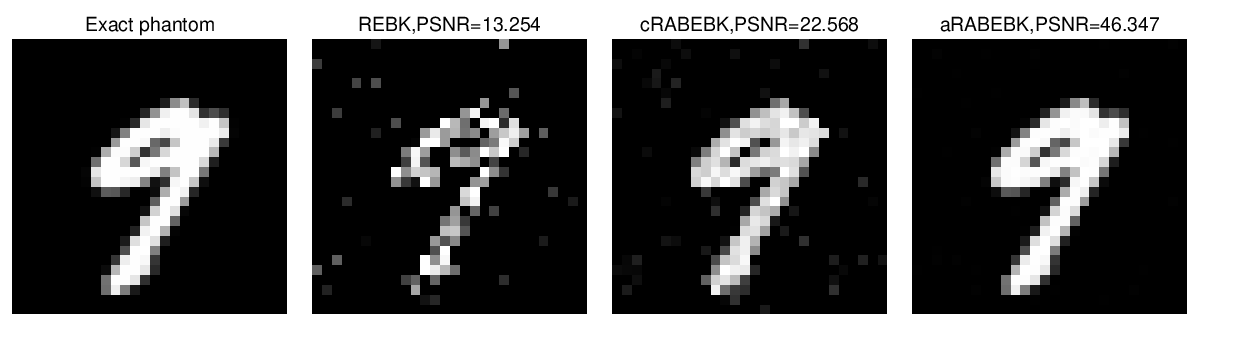}
\caption{Original MNIST image and reconstructions obtained by the REBK, cRABEBK, and aRABEBK methods in the sparse least-squares setting.}
 \label{fig:mnist_our_noise}
\end{figure}
Figure \ref{fig:mnist_our_noise} presents the reconstruction results of three test methods after running 10000 iterations in the sparse least-squares setting. Our aRABEBK methods produces significantly higher-quality reconstructions, achieving a PSNR of 46.35 compared to 13.25 and 22.59 for REBK and cRABEBK, respectively. 

\begin{figure}[!htbp]
  \centering
  \includegraphics[width=1\linewidth]{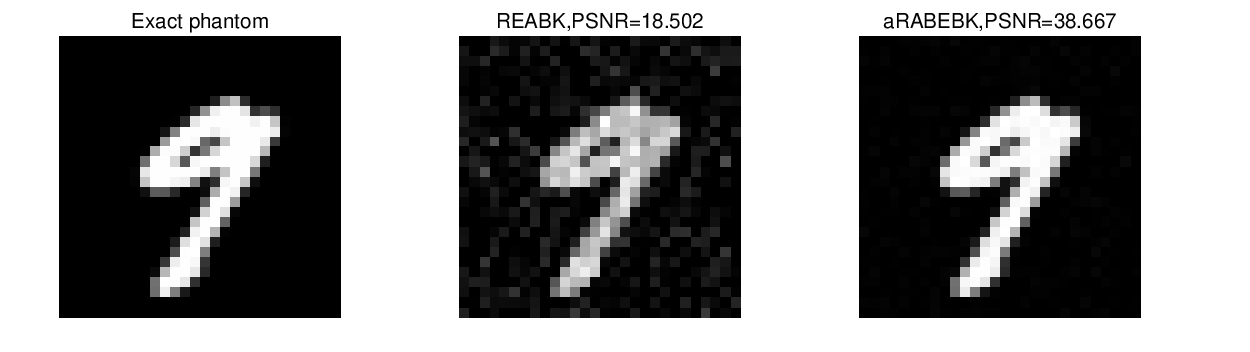}
\caption{Original MNIST image and reconstructions obtained by the REABK and aRABEBK methods in the minimum-norm least-squares setting.}
 \label{fig:mnist_mininorm}
\end{figure}
Figure~\ref{fig:mnist_mininorm} presents the reconstructed MNIST images obtained by the tested methods after 1000 iterations in the minimum-norm least-squares setting. It can be observed that aRABEBK produces a much clearer reconstruction with significantly fewer visual artifacts than its counterparts. In particular, aRABEBK achieves a PSNR of 38.67, whereas REABK attains only 18.50.

Both the visual quality and the quantitative PSNR results consistently demonstrate that the adaptive relaxation strategy substantially improves reconstruction accuracy. These findings further confirm the robustness and effectiveness of aRABEBK in handling real-world inverse problems with noisy data arising from sparse and minimum-norm least-squares formulations.

\section{Conclusion} \label{sec:conclusion}
We introduce the \emph{adaptive randomized averaging block extended Bregman-Kaczmarz} (aRABEBK) method for solving general composite optimization problems. Adaptive relaxation parameters dynamically adjust the contribution of each selected block, fully exploiting block-averaged information and enabling accelerated convergence.
Numerical experiments demonstrate that aRABEBK outperforms existing Kaczmarz-type methods in both iteration count and computational efficiency. 

The proposed framework is highly flexible and can be applied to a broad class of inverse problems beyond standard least-squares formulations, including structured recovery tasks such as sparse phase retrieval and other imaging applications. Future work including extending the adaptive strategy and investigating its performance on large-scale, real-world inverse problems is deserved to further study.\\

\noindent \textbf{Funding} The work of the authors was partially supported by National Natural Science Foundation of China grants 12471357 and the China Postdoctoral Science Foundation under Grant Number 2025M783127.\\

\noindent \textbf{Data availability} No new data were created or analyzed in this study.

\section*{Declarations}
\noindent\textbf{Competing Interests} The authors declare that they have no conflict of interest.

\bibliographystyle{plain}
\bibliography{ref_adpRABEBK}
\end{document}